\newtheorem{theorem}{Theorem}[section]
\newtheorem{lemma}[theorem]{Lemma}
\newtheorem{definition}[theorem]{Definition}
\def\l{\lambda}
\def\gcd{{\rm gcd}}
\title{{\bf On the local base set of primitive and nonpowerful signed
digraphs\thanks{Supported by NSFC
(Nos. 11271315, 11171290, 11171728) and Jiangsu Qing Lan Project (2014A).}}}
\author{Guanglong Yu$^{a, b}$\thanks{E-mail addresses:
yglong01@163.com (Yu).}
~ Zhengke Miao$^b$ \thanks{Corresponding author: zkmiao@xznu.edu.cn.}
\\ ~ \\
{\footnotesize $^a$Department of Mathematics, Yancheng Teachers
University, Yancheng, 224002, China}\\
{\footnotesize $^b$Department of Mathematics, Xuzhou Normal
University,  Xuzhou, 221116, China}}
\date{}
\begin{document}
%\openup 1.0\jot
\maketitle

\begin{abstract}
In this paper, we consider the local bases of primitive nonpowerful
sign pattern matrices, show that there are ``{\it gaps}" in the
local base set and characterize some sign pattern matrices with
given local bases.

\bigskip
\noindent {\bf AMS Classification:} 05C50

\noindent {\bf Keywords:} Gap; Primitive and nonpowerful; Signed digraph;
Local base
\end{abstract}

\section{Introduction}

\ \ \ \ In this paper, we permit loops but no multiple arcs in a
digraph. We denote by $V(S)$ the vertex set and denoted by $E(S)$ the
arc set for a digraph $S$. A digraph is called a $signed$ digraph if its each edge is assigned one of the signs $-1$ and $1$.
In a signed digraph, the sign of a directed
walk $W=v_{0}e_{1}v_{1}e_{2}\cdots
e_{k}v_{k}\ (e_{i}=(v_{i-1}$, $v_{i})$, $1\le i\le k)$, denoted by
$\mathrm{sgn}$$(W)$, is $\prod\limits_{ i =1}^k $sgn$(e_{i})$. The underlying graph of a signed digraph $S$, denote by $|S|$, is obtained by replacing the sign of each negative edge (with sign $-1$) with sign $1$.

\begin{definition} \label{de1.02}%------
A strongly connected digraph $S$ is primitive if there exists a
positive integer $k$ such that for any two vertices $v_{i}, v_{j}$ (not necessarily distinct), there exists a directed walk of
length $k$ from $v_{i}$ to $v_{j}$. The least such $k$ is called the
primitive index of $S$, and is denoted by $\mathrm{exp}(S)$.
\end{definition}

As a result, we know that, in a primitive digraph, there exist the least positive integer $k$ such that
there is a directed
walk of length $t$ from $v_{i}$ to $v_{j}$ for any integer $t\geq l$
is called the local primitive index from $v_{i}$ to $v_{j}$. The least $k$ is called the the local
primitive index from $v_{i}$ to $v_{j}$, denoted by $\mathrm{exp}_{S}(v_{i}, v_{j})$. $\mathrm{exp}_{S}(v_{i})=\max\limits_{v_{j}\in
V(S)}\{\mathrm{exp}_{S}(v_{i}, v_{j})\}$ is called the local
primitive index at $v_{i}$. Therefore,
$\mathrm{exp}(S)=\max\limits_{v_{i}\in
V(S)}\{\mathrm{exp}_{S}(v_{i})\}$.

\begin{definition} \label{de1.4}%------
Assume that $W_{1}$, $W_{2}$ are two directed walks in signed
digraph $S$. They are called a pair of $SSSD$ walks if they have the
same initial vertex, the same terminal vertex and the same length,
but they have different sign.
\end{definition}

\begin{definition} \label{de1.6}%------
A signed digraph $S$ is primitive and nonpowerful if there exists a
positive integer $l$ such that for any integer $t\geq l$, there are
a pair of $SSSD$ walks of length $t$ from any vertex $v_{i}$ to any
vertex $v_{j}(v_{i}, v_{j}\in V(S))$. The least such $l$ is called
the base of $S$, denoted by $l(S)$.
\end{definition}

As a result, in a primitive and
nonpowerful signed digraph $S$, for $u, v \in V(S)$, there exists an integer $k$ such that their is a pair of $SSSD$ walks
of length $t$ from $u$ to $v$ for any integer $ t\geq k$. The least such $k$ is called the
local base from $u$ to $v$, denoted by $l_{S}(u, v)$.
$l_{S}(u)=\max\limits_{v\in V(S)}\{l_{S}(u, v)\}$ is called the local base at vertex $u$. Therefore,
$$l(S) = \max\limits_{u\in V(S)}l_{S} (u)= \max\limits_{u,v\in
V(S)}l_{S} (u, v).$$

The primitivity of a digraph have been studied extensively which is closely related
to many other problems in various areas of pure and applied
mathematics (for example, see \cite{BLiu}-\cite{55}). For a primitive and nonpowerful signed digraph, the base always seems being not equal to its primitive index, and studying the base needs more treatment (see \cite{Li}, \cite{ShaoYou}, \cite{YSZ}).
Simultaneously, for a primitive and nonpowerful signed digraph, we find that the local base always seems different from its local primitive index (see \cite{W.M}). In \cite{YSZ}, we find that studying the base or local base of a signed digraph is of
great significance for communication science and for studying the properties of sign matrices.

In this paper, we consider the local bases of primitive nonpowerful
sign pattern matrices. The paper is organized as follows: Section 1
introduces the basic ideas of patterns and their supports; Section 2
introduces series of working lemmas; Section 3 shows that there are
some gaps in the local base set and characterizes some digraphs with
given local bases.

\section{Preliminaries}

\ \ \ \ We first introduce some notations.  We denoted by $L(W)$ the length of a directed walk, and denote by $d(v_{i}, v_{j})$ or $d_{S}(v_{i}, v_{j})$ the distance from $v_{i}$ to $v_{j}$ in signed digraph $S$. We denote by $C_{k}$ or {\it $k$-cycle} a directed cycle with length $k$, and denote by $P_{k}$ a directed path of order $k$. A cycle with even (odd) length is called
an {\it even cycle} ({\it odd cycle}). The length of the shortest
directed cycle in a digraph is called the {\it girth} of this digraph.
When there is no ambiguity, a directed walk, a directed
path or a directed cycle will be called a walk, a path or a cycle. A
walk is called a {\it positive (negative) walk} if its sign is positive (negative). The
union of digraphs $H$ and $G$ is the digraph $G\bigcup H$ with
vertex set $V(G)\bigcup V(H)$
 and arc set $E(G)\bigcup E(H)$. The intersection $G\bigcap H$ of digraphs $H$ and $G$ is defined analogously. If
$p$ is a positive integer and if $C$ is a cycle, then $pC$ denotes
the walk obtained by traversing through $C$ $p$ times. If a cycle
$C$ passes through the end vertex of $W$, $W\bigcup pC$ denotes the
the walk obtained by going along $W$ and then going around the cycle
$C$ $p$ times; $pC\bigcup W$ is similarly defined. We use the
notation $v\stackrel{k}{\longrightarrow}u$
($v\stackrel{k}{\not\longrightarrow}u$) to denote that there exists
a (exists no) directed walk with length $k$ from vertex $v$ to $u$.
For a digraph $S$, let $R_{k}(v)=\{u|$
$v\stackrel{k}{\longrightarrow}u$, $u\in V(S)\}$. For a vertex
subset $T$ in a digraph $S$, let $T\stackrel{k}{\longrightarrow}u$
mean that there exists a $s\in T$ such that
$s\stackrel{k}{\longrightarrow}u$.

For a strongly connected digraph $S$ with order $n$, let $C(S)$
denote the cycle length set.

\begin{definition} \label{de2.1}%------
Let $\{s_{1}$, $s_{2}$, $\cdots$, $s_{\lambda} \}$ be a set of
distinct positive integers with
 gcd$(s_{1}$, $s_{2}$, $\cdots$, $s_{\lambda})$ = 1. The Frobenius number of $
s_{1}$, $s_{2}$, $\cdots$, $s_{\lambda} $, denoted by $\phi(s_{1},
s_{2}, \cdots, s_{\lambda})$, is the smallest positive integer $m$
such that for all positive integers $k\geq m$, there are nonnegative
integers $a_{i} \ (i = 1, 2, \cdots, \lambda)$ such that $k=\sum
\limits_{i = 1}^ \lambda$ $ a_{i}s_{i}$.
\end{definition}

It is well known that

\begin{lemma}{\bf (\cite{LB})} \label{le2.2}%------
If gcd$(s_{1}, s_{2}) = 1$, then $\phi(s_{1}, s_{2}) = (s_{1} -
1)(s_{2} - 1)$.
\end{lemma}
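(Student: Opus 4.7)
The plan is to show the two complementary facts that together pin down the Frobenius number: first, that every integer $k \geq (s_1-1)(s_2-1)$ admits a representation $k = a_1 s_1 + a_2 s_2$ with $a_1, a_2$ nonnegative, and second, that the integer $k^{*} = (s_1-1)(s_2-1) - 1 = s_1 s_2 - s_1 - s_2$ admits no such representation. Combined, these give $\phi(s_1,s_2) = (s_1-1)(s_2-1)$ exactly in the sense of Definition \ref{de2.1}.

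For the first fact, I would fix $k \geq (s_1-1)(s_2-1)$ and use the hypothesis $\gcd(s_1,s_2) = 1$ to obtain integers $x, y \in \ZZZ$ (possibly negative) with $k = x s_1 + y s_2$. I would then reduce $x$ modulo $s_2$, replacing the pair $(x,y)$ by $(x - q s_2, y + q s_1)$ so that $0 \le x \le s_2 - 1$; this preserves the identity and leaves $a_1 := x \geq 0$. The remaining step is to check that the new $y$ is also nonnegative, which follows from $y s_2 = k - x s_1 \geq (s_1-1)(s_2-1) - (s_2-1)s_1 = -(s_2-1) > -s_2$, forcing $y \geq 0$.

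For the second fact, I would argue by contradiction. Suppose $s_1 s_2 - s_1 - s_2 = a s_1 + b s_2$ with $a, b \geq 0$. Rearranging gives $s_1 s_2 = (a+1)s_1 + (b+1)s_2$, so $s_2 \mid (a+1)s_1$, and since $\gcd(s_1, s_2) = 1$ we get $s_2 \mid (a+1)$, whence $a+1 \geq s_2$. By symmetry $b+1 \geq s_1$. Plugging in, $a s_1 + b s_2 \geq (s_2-1)s_1 + (s_1-1)s_2 = 2 s_1 s_2 - s_1 - s_2 > s_1 s_2 - s_1 - s_2$, contradicting the assumed equality.

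The only mildly delicate step is the estimate used to force $y \geq 0$ after the reduction of $x$; the threshold $(s_1-1)(s_2-1)$ is tight precisely because at $k = (s_1-1)(s_2-1)-1$ the same argument yields $y s_2 = -s_2$, which violates the inequality. Once these two observations are in place, the lemma follows directly from the definition of $\phi(s_1, s_2)$ as the smallest $m$ beyond which every integer is representable.
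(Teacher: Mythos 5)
Your proof is correct. Note that the paper does not prove this lemma at all --- it is quoted from the reference [LB] (Liu's \emph{Combinatorial Matrix Theory}) without argument --- so there is no in-paper proof to compare against; what you have written is the standard self-contained derivation of Sylvester's formula. Both halves check out: the reduction of $x$ modulo $s_2$ to the range $0\le x\le s_2-1$ gives $ys_2=k-xs_1\ge (s_1-1)(s_2-1)-(s_2-1)s_1=-(s_2-1)>-s_2$, hence $y\ge 0$; and the divisibility argument for the non-representability of $s_1s_2-s_1-s_2$ is airtight. The only caveat worth recording is the degenerate case $\min\{s_1,s_2\}=1$: there $(s_1-1)(s_2-1)=0$ and your $k^{*}=-1$ is not a positive integer, so the second half becomes vacuous and the conclusion rests on the paper's convention (stated just after Definition \ref{de2.1}) that $\phi=0$ in that case, even though Definition \ref{de2.1} literally asks for the smallest \emph{positive} integer $m$. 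For $s_1,s_2\ge 2$ your argument is complete as written.
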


From Definition \ref {de2.1}, it is easy to see that $\phi(s_{1},
s_{2},\cdots,s_{\lambda})\leq \phi(s_{i}, s_{j})$ if there exist
$s_{i}, s_{j}\in\{ s_{1}$, $s_{2}$, $\cdots$, $s_{\lambda} \}$ such
that gcd$(s_{i}, s_{j}) = 1$. So if $\min \{s_{i}:1\leq i\leq
\lambda\} = 1$, then $\phi(s_{1}, s_{2},\cdots,s_{\lambda})=0$.

\begin{lemma}{\bf (\cite{Kim})} \label{le2.5}%------
A digraph $S$ with $C(S)=\{p_{1}, p_{2}, \cdots, p_{t}\}$ is primitive if and only if $S$ is strongly
connected $\gcd(p_{1}, p_{2}, \cdots, p_{t})=1$.
\end{lemma}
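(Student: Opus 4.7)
The statement is the classical Wielandt/Dulmage--Mendelsohn characterization of primitivity in terms of cycle lengths, so the plan is to split into the two implications and build the harder direction on the Frobenius number reasoning already introduced in Definition \ref{de2.1} and Lemma \ref{le2.2}.

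For the forward direction (``primitive implies strongly connected and $\gcd=1$''), strong connectivity is built into Definition \ref{de1.02}. For the gcd condition I will argue by contradiction: suppose $d=\gcd(p_1,\dots,p_t)>1$. Every closed walk in $S$ decomposes (edge-by-edge) into a union of elementary cycles, so its length is an integer combination of $p_1,\dots,p_t$ and is divisible by $d$. Fixing any vertex $v$, primitivity produces closed walks through $v$ of every sufficiently large length; in particular, of two consecutive lengths $k$ and $k+1$. But both must be multiples of $d$, forcing $d\mid 1$, a contradiction.

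For the backward direction (``strongly connected and $\gcd=1$ implies primitive''), I will construct, for each ordered pair $(u,v)$ of vertices, walks of all sufficiently large lengths from $u$ to $v$. Pick any cycles $C_1,\dots,C_t$ realizing the lengths $p_1,\dots,p_t$ and choose one vertex $x_i\in V(C_i)$ on each. Using strong connectivity, concatenate shortest paths as
\[
u\longrightarrow x_1\longrightarrow x_2\longrightarrow\cdots\longrightarrow x_t\longrightarrow v,
\]
and insert $a_i$ traversals around $C_i$ at $x_i$. The resulting walk has length
\[
L(u,v;a_1,\dots,a_t)=\ell_0(u,v)+\sum_{i=1}^{t}a_i p_i,
\]
where $\ell_0(u,v)=d(u,x_1)+d(x_1,x_2)+\cdots+d(x_{t-1},x_t)+d(x_t,v)$ is a constant depending only on $u,v$ and the fixed choices. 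Since $\gcd(p_1,\dots,p_t)=1$, the Frobenius bound $\phi(p_1,\dots,p_t)$ from Definition \ref{de2.1} guarantees that every integer $m\ge \phi(p_1,\dots,p_t)$ is a nonnegative integer combination $\sum a_ip_i$. Hence $S$ admits a walk from $u$ to $v$ of every length $k\ge \ell_0(u,v)+\phi(p_1,\dots,p_t)$.

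Finally, I will take the maximum of this threshold over all ordered pairs $(u,v)$; since $V(S)$ is finite this gives a single integer $K$ such that for every $k\ge K$ and every pair $v_i,v_j$ there is a walk of length $k$ from $v_i$ to $v_j$, which is exactly primitivity. The main (and essentially only) obstacle is the backward direction, and within it the key non-combinatorial input is the Frobenius-number fact that $\gcd=1$ forces the numerical semigroup $\{\sum a_ip_i:a_i\in\mathbb Z_{\ge0}\}$ to contain all sufficiently large integers; everything else is bookkeeping with shortest paths and cycle insertions.
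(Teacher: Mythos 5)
Your proof is correct, but note that the paper itself offers no argument for Lemma \ref{le2.5}: it is imported verbatim from \cite{Kim} as a known classical fact, so there is no internal proof to compare against. What you have written is the standard self-contained demonstration, and both directions are sound. The forward direction correctly combines the decomposition of a closed walk into elementary cycles (so every closed-walk length lies in the numerical semigroup generated by $p_1,\dots,p_t$ and is divisible by $d$) with the observation that primitivity yields closed walks of two consecutive lengths, forcing $d=1$; the backward direction is the usual ``thread a walk through one cycle of each length and pump'' construction, whose correctness rests on strong connectivity (to guarantee the connecting shortest paths exist) and on the finiteness of $\phi(p_1,\dots,p_t)$ when $\gcd(p_1,\dots,p_t)=1$. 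The only point worth flagging is that this last finiteness is the genuine number-theoretic input: Definition \ref{de2.1} presupposes it rather than proves it, and Lemma \ref{le2.2} only covers the case $t=2$ with two coprime generators, which need not exist inside a general gcd-one set (e.g.\ $\{6,10,15\}$). A fully self-contained write-up would either invoke the general Frobenius/Schur fact explicitly or reduce to it by an elementary argument; since the paper itself takes this for granted, your reliance on it is consistent with the surrounding text. In short, your proof supplies a complete argument where the paper supplies only a citation, at the modest cost of assuming the general Frobenius finiteness statement.
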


For a primitive digraph $S$, suppose $C(S)=\{p_{1}$, $p_{2}$,
$\ldots$, $p_{u}\}$. Let $d_{C(S)}(v_{i}$, $v_{j})$ denote the
length of the shortest walk from $v_{i}$ to $v_{j}$ which meets at
least one $p_{i}$-cycle for each $i$, $i=1$, $2, \cdots, u$. Such a
shortest directed walk is called a $C(S)$-walk from $v_{i}$ to
$v_{j}$. And further, $d_{C(S)}(v_{i})$, $d_{i}(C(S))$ and $d(C(S))$ are
defined as follows: $d_{C(S)}(v_{i})=\max\{d_{C(S)}(v_{i}$,
$v_{j})$: $v_{j}\in V(S)\}$, $d(C(S))=\max\{d_{C(S)}(v_{i}$,
$v_{j})$: $v_{i}$, $v_{j}\in V(S)\}$, $d_{i}(C(S))$ $(1\leq i \leq
n)$ is the $ith$ smallest one in $\{d_{C(S)}(v_{i})| 1\leq i \leq
n\}$, $d_{n}(C(S))=d(C(S))$. In particular, if $C(S)=\{p, q\}$,
$d(C(S))$ can be simply denoted by $d\{p, q\}.$

\begin{lemma}{\bf (\cite{BLiu})} \label{le2.6}%------
Let $S$ be a primitive digraph of order $n$
  and $C(S)=\{p_{1}$, $p_{2}$, $\ldots$, $p_{u}\}$.
  Then $\mbox{exp}(v_{i},v_{j}) \leq d_{C(S)}(v_{i},v_{j})+\phi(p_{1},p_{2},\ldots,p_{u})$ for
  $v_{i}$, $v_{j}\in V(S)$. And furthermore, we have $\mathrm{exp}(S)\leq d(C(S))+\phi(p_{1},p_{2},\ldots,p_{u})$.
\end{lemma}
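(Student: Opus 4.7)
The plan is to combine the $C(S)$-walk with repeated traversals of cycles whose lengths are dictated by the Frobenius number. Concretely, fix $v_i, v_j \in V(S)$ and let $W$ be a $C(S)$-walk from $v_i$ to $v_j$, so $L(W) = d_{C(S)}(v_i, v_j)$ and for each $r \in \{1, 2, \ldots, u\}$ the walk $W$ shares a vertex with some cycle $C^{(r)}$ of length $p_r$. I want to show that for every integer $t \geq d_{C(S)}(v_i, v_j) + \phi(p_1, p_2, \ldots, p_u)$, there is a directed walk of length exactly $t$ from $v_i$ to $v_j$; this will give the first inequality, and taking the maximum over $v_i, v_j$ yields the bound on $\exp(S)$.

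The first step is to reduce the length budget. Set $m = t - d_{C(S)}(v_i, v_j)$; by hypothesis $m \geq \phi(p_1, \ldots, p_u)$. By Lemma \ref{le2.5}, $S$ being primitive forces $\gcd(p_1, p_2, \ldots, p_u) = 1$, so Definition \ref{de2.1} applies and yields nonnegative integers $a_1, \ldots, a_u$ with
\[
m = \sum_{r=1}^{u} a_r p_r.
\]
The second step is to realize this decomposition geometrically: because $W$ meets $C^{(r)}$ at some vertex $w_r$, I can detour from $W$ at $w_r$ and traverse $C^{(r)}$ exactly $a_r$ times before continuing, for each $r$. Formally, this gives a walk of the form $W_0 \cup a_1 C^{(1)} \cup W_1 \cup a_2 C^{(2)} \cup W_2 \cup \cdots \cup a_u C^{(u)} \cup W_u$, where $W_0 \cup W_1 \cup \cdots \cup W_u = W$ is partitioned at the attachment vertices $w_1, \ldots, w_u$ in their order of appearance along $W$. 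The length of this walk is $L(W) + \sum_{r=1}^u a_r p_r = d_{C(S)}(v_i, v_j) + m = t$, as required.

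The only subtlety is the bookkeeping in the insertion step: the vertices $w_1, \ldots, w_u$ must be taken in the order they appear along $W$ (allowing repetitions if a single vertex lies on several of the cycles, which causes no trouble), and the traversals of each $C^{(r)}$ begin and end at $w_r$, so the concatenation is a genuine directed walk. Beyond this, the argument is routine once the Frobenius decomposition is in hand. Finally, since $d_{C(S)}(v_i, v_j) \leq d(C(S))$ for all $v_i, v_j$, the pointwise bound $\exp_S(v_i, v_j) \leq d_{C(S)}(v_i, v_j) + \phi(p_1, \ldots, p_u)$ implies $\exp(S) = \max_{v_i, v_j} \exp_S(v_i, v_j) \leq d(C(S)) + \phi(p_1, \ldots, p_u)$, completing the second inequality.
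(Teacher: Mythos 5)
Your proof is correct and is the standard argument for this classical bound (the paper itself cites Lemma \ref{le2.6} from \cite{BLiu} without proof): take a $C(S)$-walk, use primitivity via Lemma \ref{le2.5} to get $\gcd(p_1,\ldots,p_u)=1$, write the excess length $m\geq\phi(p_1,\ldots,p_u)$ as $\sum_r a_rp_r$, and splice $a_r$ traversals of a $p_r$-cycle into the walk at a common vertex. Your handling of the insertion order and the final passage from the pointwise bound to $\exp(S)$ are both fine, so there is nothing to correct.
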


\begin{lemma}{\bf (\cite{ShaoYou})} \label{le2.101}%------
Let $S$ be a primitive
  nonpowerful signed digraph. Then $S$ must contain a $p_{1}$-cycle $C_{1}$
  and a $p_{2}$-cycle $C_{2}$ satisfying one
of the following two conditions:

(1) $p_{i}$ is odd, $p_{j}$ is even and sgn$C_{j}=-1$ ($i,j = 1,2$;
$i\neq j)$.

(2) $p_{1}$ and $p_{2}$ are both odd and sgn$C_{1}=-$sgn$C_{2}$.
\end{lemma}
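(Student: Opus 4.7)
The plan is to combine nonpowerfulness with the cycle decomposition of closed walks to force the existence of the required cycles, arguing by contradiction from the assumption that neither (1) nor (2) holds.

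First I would convert SSSD walks into closed walks of opposite sign. By the nonpowerful hypothesis, there exist walks $W_1$ and $W_2$ in $S$ with a common initial vertex $u$, common terminal vertex $v$, common length, and $\mathrm{sgn}(W_1)=-\mathrm{sgn}(W_2)$. Since $S$ is strongly connected (being primitive), fix any walk $Q$ from $v$ to $u$ and set $W_1' = W_1\bigcup Q$ and $W_2' = W_2\bigcup Q$; these are closed walks at $u$ with $L(W_1')=L(W_2')$ and $\mathrm{sgn}(W_1')=-\mathrm{sgn}(W_2')$. I will also use the elementary fact that every closed walk $W$ in a directed graph decomposes, as a multiset of arcs, into an arc-disjoint union of simple directed cycles $D_1,\dots,D_m$ (an Eulerian argument: in the arc multiset the in- and out-degrees agree at every vertex); accordingly $L(W)=\sum_j L(D_j)$ and $\mathrm{sgn}(W)=\prod_j \mathrm{sgn}(D_j)$.

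Now suppose, for contradiction, that $S$ contains neither a pair of cycles satisfying (1) nor a pair satisfying (2). Lemma \ref{le2.5} gives $\gcd$ of the cycle lengths equal to $1$, so $S$ has at least one odd cycle. The failure of (2) forces every odd cycle of $S$ to carry one common sign $\epsilon$, while the failure of (1), in the presence of an odd cycle, forces every even cycle of $S$ to have sign $+1$. Decomposing each $W_i'$ then yields $\mathrm{sgn}(W_i')=\epsilon^{n_o(W_i')}$, where $n_o(W_i')$ counts the odd cycles in its decomposition; moreover the parity of $L(W_i')$ coincides with the parity of $n_o(W_i')$, since even cycles contribute $0\bmod 2$ to the length. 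Hence $\mathrm{sgn}(W_i')$ is determined by $L(W_i')\bmod 2$, and $L(W_1')=L(W_2')$ forces $\mathrm{sgn}(W_1')=\mathrm{sgn}(W_2')$, contradicting the construction. Therefore one of (1), (2) must hold.

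The main obstacle I anticipate is stating the arc-multiset cycle decomposition cleanly and tracking signs in the two cases; neither is deep, but both require precise bookkeeping. Once the key invariant $\mathrm{sgn}(W)=\epsilon^{L(W)\bmod 2}$ for closed walks under the ``bad'' hypothesis is identified, the contradiction is immediate.
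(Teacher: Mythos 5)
The paper states this lemma as a citation from \cite{ShaoYou} and supplies no proof of its own, so there is nothing internal to compare against; your argument is correct and is essentially the standard one from that reference. Closing the SSSD pair into two equal-length closed walks of opposite sign, decomposing the arc multiset of a closed walk into simple directed cycles, and observing that the absence of a distinguished cycle pair forces $\mathrm{sgn}(W)=\epsilon^{L(W)\bmod 2}$ for every closed walk $W$ is exactly the right mechanism, and each step (existence of an odd cycle via $\gcd$ of cycle lengths, all even cycles positive, all odd cycles of one sign) is justified.
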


$C_{1}$, $C_{2}$ satisfying condition (1) or (2) are always called
{\it a distinguished cycle pair}. It is easy to prove that
$W_{1}=p_{2}C_{1}$ and $W_{2}=p_{1}C_{2}$ have the same length
$p_1p_2$ but different sign if $p_{1}$-cycle $C_{1}$
  and $p_{2}$-cycle $C_{2}$ are a
distinguished cycle pair, namely $(\mbox{sgn}C_{1})^{p_{2}} =
-((\mbox{sgn}C_{2})^{p_{1}}).$

\begin{lemma}{\bf (\cite{W.M})}\label{le2.12}%------
Let $S$ be a primitive
  nonpowerful signed digraph of order $n$ and $u \in
V(S)$. If there exists a pair of $SSSD$ walks with length $r$ from
$u$ to $u$, then $l_{S}(u) \leq \mbox{exp}_{S}(u) + r.$
\end{lemma}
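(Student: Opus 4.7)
The plan is to exploit the SSSD pair at $u$ as a ``sign-flipping gadget'' that we can prepend to any ordinary walk of sufficient length starting at $u$.

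Fix any $v\in V(S)$ and any integer $t\geq \mathrm{exp}_{S}(u)+r$. First I would produce a single directed walk of length $t-r$ from $u$ to $v$. Since $t-r\geq \mathrm{exp}_{S}(u)\geq \mathrm{exp}_{S}(u,v)$, the definition of the local primitive index guarantees such a walk; call it $W$. By hypothesis there exist walks $W_{1}$ and $W_{2}$, both of length $r$, both from $u$ to $u$, with $\mathrm{sgn}(W_{1})=-\mathrm{sgn}(W_{2})$.

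Now concatenate: form $W_{1}\bigcup W$ and $W_{2}\bigcup W$. Each is a walk from $u$ to $v$ of length $r+(t-r)=t$, so they share the same initial vertex, the same terminal vertex, and the same length. Their signs are $\mathrm{sgn}(W_{i})\cdot\mathrm{sgn}(W)$ for $i=1,2$, and because $\mathrm{sgn}(W_{1})\neq\mathrm{sgn}(W_{2})$ the two products also differ in sign. Hence $W_{1}\bigcup W$ and $W_{2}\bigcup W$ form a pair of SSSD walks of length $t$ from $u$ to $v$.

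Since this holds for every $t\geq \mathrm{exp}_{S}(u)+r$ and every $v\in V(S)$, we conclude $l_{S}(u,v)\leq \mathrm{exp}_{S}(u)+r$ for each $v$, and taking the maximum over $v$ gives $l_{S}(u)\leq \mathrm{exp}_{S}(u)+r$. There is essentially no obstacle here: the only subtlety is the existence of the ``connecting'' walk $W$ of the precise length $t-r$, which is handed to us by the definition of the local primitive index at $u$ together with the inequality $t-r\geq \mathrm{exp}_{S}(u)$. The argument is a short concatenation, and I would expect the write-up to be just a few lines.
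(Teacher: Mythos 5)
Your argument is correct and is exactly the standard concatenation proof: prepend the SSSD closed-walk pair at $u$ to a single walk of length $t-r$ from $u$ to $v$, whose existence for all $t-r\geq \mathrm{exp}_{S}(u)$ is precisely what the local primitive index guarantees. The paper itself cites this lemma from \cite{W.M} without reproducing a proof, and your write-up matches the argument given there, with no gaps.
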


\begin{lemma}{\bf (\cite{W.M})}\label{le2.13}%------
Let $S$ be a primitive
  nonpowerful signed digraph of order $n$, then we have $l_{S}(k) \leq l_{S}(k-1) +
  1$ for $ 2\leq k \leq n$.
\end{lemma}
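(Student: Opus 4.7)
The plan is to first establish a key local inequality for a single vertex and one of its out-neighbors, namely
$$l_S(u) \leq l_S(w) + 1 \quad \text{for every arc } u \to w \text{ in } S,$$
and then derive the lemma by a short contradiction argument that exploits the strong connectedness of $S$.

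To prove the local inequality, fix $u \in V(S)$ and an out-neighbor $w$ of $u$ (which exists since $S$ is strongly connected). For an arbitrary target vertex $y \in V(S)$ and any integer $t \geq l_S(w) + 1$, we have $t-1 \geq l_S(w) \geq l_S(w,y)$, so there is a pair of SSSD walks $W_1', W_2'$ of length $t-1$ from $w$ to $y$. Prepending the arc $uw$ to each produces walks $W_1, W_2$ of length $t$ from $u$ to $y$ whose signs satisfy $\mathrm{sgn}(W_i) = \mathrm{sgn}(uw)\cdot \mathrm{sgn}(W_i')$; since $W_1'$ and $W_2'$ have opposite signs and we multiplied both by a common factor $\pm 1$, the walks $W_1$ and $W_2$ also have opposite signs. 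Hence $l_S(u,y) \leq l_S(w) + 1$, and taking the maximum over $y$ gives $l_S(u) \leq l_S(w) + 1$, as desired. The only subtlety here is this sign check, which I expect to be the main (if minor) obstacle; everything else is bookkeeping.

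Now order the vertices $v_1, v_2, \ldots, v_n$ so that $l_S(v_1) \leq l_S(v_2) \leq \cdots \leq l_S(v_n)$, identifying $l_S(k) = l_S(v_k)$. Suppose, for contradiction, that $l_S(k) \geq l_S(k-1) + 2$ for some $k$ with $2 \leq k \leq n$. Let $A = \{v_k, v_{k+1}, \ldots, v_n\}$; note that $A$ is a proper nonempty subset of $V(S)$ because $1 \leq k-1$ and $k \leq n$. For any $u \in A$ and any out-neighbor $w$ of $u$, the local inequality gives
$$l_S(w) \geq l_S(u) - 1 \geq l_S(k) - 1 > l_S(k-1).$$
By the ordering no vertex has local base value strictly between $l_S(k-1)$ and $l_S(k)$, so $l_S(w) \geq l_S(k)$ and therefore $w \in A$.

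Consequently every arc leaving $A$ actually stays inside $A$, so no vertex of the nonempty set $V(S)\setminus A$ is reachable from $A$. This contradicts the strong connectedness of the primitive digraph $S$, and the lemma $l_S(k) \leq l_S(k-1) + 1$ follows.
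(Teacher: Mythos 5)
Your proof is correct: the arc inequality $l_S(u)\leq l_S(w)+1$ for an arc $(u,w)$ (prepending the arc preserves the SSSD property since both signs get multiplied by the same $\pm 1$), combined with the strong-connectivity argument on the set $A$, is exactly the standard way this lemma is established. The paper itself gives no proof, importing the statement from \cite{W.M}, and your argument is essentially the one found there (usually phrased directly: pick an arc from outside $\{v_1,\dots,v_{k-1}\}$ into that set, rather than by contradiction), so there is nothing further to compare.
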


Let $D_1$ consists of cycle $(v_{n}$, $v_{n-1}$, $\cdots$ , $v_{2}$,
$v_{1}$, $v_{n})$ and arc $(v_{1}$, $v_{n-1})$ and $D_{2}$ =
$D_{1}\bigcup \{(v_{2}$, $v_{n})\}$. Then we have the next
lemma.

\begin{lemma}{\bf (\cite{W.M})}\label{le2.14.01}%------
Let $S$ be a primitive nonpowerful signed digraph of order $n$ with
$D_{1}$ as its underlying digraph. Then we have $l_{S}(k) = 2n^{2}
-4n + k + 2$ for $1\leq k \leq n$.
\end{lemma}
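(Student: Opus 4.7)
The plan is to exploit the highly constrained structure of $D_1$: it contains exactly two cycles, the Hamilton cycle $C_n = v_n v_{n-1}\cdots v_1 v_n$ and the $(n-1)$-cycle $C_{n-1} = v_1 v_{n-1} v_{n-2}\cdots v_1$, which share all arcs except those incident with $v_1$. Since $\gcd(n,n-1)=1$, $D_1$ is primitive. Exactly one of $\{n,n-1\}$ is even, and by Lemma \ref{le2.101} that even cycle must carry sign $-1$; the other may be assumed positive. Moreover, $|N^{+}(v_i)|=1$ for every $i\neq 1$, so any walk starting at $v_k$ is forced to begin with the path $v_k\to v_{k-1}\to\cdots\to v_1$ of length $k-1$, and only at $v_1$ does the walk have choices (between $v_1\to v_n$ and $v_1\to v_{n-1}$).

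This leads to a \emph{canonical decomposition}: every walk of length $\ell$ from $v_k$ to $v_j$ corresponds to a triple $(a,b,f)$ satisfying
\[
\ell \;=\; (k-1) \;+\; an \;+\; b(n-1) \;+\; f,
\]
where $a,b\geq 0$ count full traversals of $C_n$ and $C_{n-1}$ based at $v_1$, and $f\in\{0,1,\ldots,n-1\}$ is the length of a final partial segment, whose starting cycle (together with $f$) is determined, up to at most two choices, by $v_j$. Crucially, the sign of any such walk depends only on the parity of whichever of $a,b$ corresponds to the negative cycle, together with a fixed contribution from the forced prefix and the final partial segment; in particular, the order in which full cycles are interleaved is irrelevant. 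Thus SSSD pairs from $v_k$ to $v_j$ of length $\ell$ correspond to two triples $(a_i,b_i,f_i)$ satisfying the equation above whose resulting signs differ.

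For the lower bound $l_S(v_k)\geq 2n^2-4n+k+2$, I would take $v_j=v_n$ and $\ell_0 = 2n^2-4n+k+1$. Since $v_n$ lies only on $C_n$, any walk ending at $v_n$ has its final partial beginning $v_1\to v_n$, forcing $f=1$. The equation becomes $an+b(n-1)=\ell_0-k=2(n-1)^2-1$, whose only nonnegative solution is $(a,b)=(n-2,n-1)$ (the next Frobenius-shifted solution would require $b<0$). Hence every length-$\ell_0$ walk from $v_k$ to $v_n$ has the same $(a,b,f)$ and therefore the same sign, ruling out SSSD and giving $l_S(v_k,v_n)\geq \ell_0+1 = 2n^2-4n+k+2$.

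For the upper bound I would verify that every $\ell\geq 2n^2-4n+k+2$ and every $v_j$ admit an SSSD pair. The worst case is again $v_j=v_n$: there $\ell-k\geq 2(n-1)^2$, which admits the two nonnegative representations $(0,2n-2)$ and $(n-1,n-2)$, and their parities in the ``negative-cycle'' coordinate differ (since $n$ and $n-1$ have different parities), giving walks with opposite signs; the same argument applies to all larger $\ell-k$ by the standard two-representation Frobenius bound. For $v_j\neq v_n$ the extra freedom in choosing $f$ only weakens the required threshold. The main obstacle is the lower bound: one must verify both that $2(n-1)^2-1$ truly has a unique nonnegative representation as $an+b(n-1)$, and that the internal ordering of $C_n$'s and $C_{n-1}$'s in a walk with fixed $(a,b,f)$ does not influence the sign. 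Once this is established, Lemma \ref{le2.13} can be used to confirm that consecutive local bases increase by exactly one unit in $k$, consistent with the claimed formula.
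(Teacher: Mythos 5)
The paper itself gives no proof of this lemma --- it is quoted from reference [W.M] --- so there is no in-paper argument to compare against; your write-up is a from-scratch proof. Its skeleton is sound and the key computations check out: every vertex other than $v_1$ has out-degree one, so a walk from $v_k$ to $v_n$ is a forced prefix of length $k-1$, then $a$ copies of $C_n$ and $b$ copies of $C_{n-1}$ based at $v_1$ (whose order cannot affect the sign, since the sign is a product over arcs counted with multiplicity), then the arc $v_1\to v_n$. Two equal-length walks satisfy $(a_1-a_2,\,b_1-b_2)=((n-1)x,\,-nx)$, and the sign ratio is $\bigl((\mathrm{sgn}\,C_n)^{n-1}(\mathrm{sgn}\,C_{n-1})^{n}\bigr)^{x}=(-1)^x$ because Lemma \ref{le2.101} forces the (unique) even cycle to be negative. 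The representation $2(n-1)^2-1=(n-2)n+(n-1)(n-1)$ is indeed the only nonnegative one, which gives the lower bound at target $v_n$, and $2(n-1)^2=(n-1)n+(n-2)(n-1)=0\cdot n+(2n-2)(n-1)$ gives the matching upper bound there.

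The one step you cannot wave through is the upper bound for targets $v_j\ne v_n$. As stated, ``the extra freedom in choosing $f$ only weakens the required threshold'' is not enough: if you use only one of the two $v_1\to v_j$ segments, the threshold you obtain is $(k-1)+(n-j)+2(n-1)^2=2n^2-4n+k+1+(n-j)$, which \emph{exceeds} the claimed $2n^2-4n+k+2$ for every $j\le n-2$, so the naive estimate would only yield $l_S(v_k)\le 2n^2-3n+k-1$. You must genuinely combine the two segments: the longer segment $v_1\to v_n\to\cdots\to v_j$ together with cycle counts $(a,b)$ has the same length \emph{and} the same sign as the shorter segment $v_1\to v_{n-1}\to\cdots\to v_j$ with counts $(a+1,b-1)$, so the admissible pairs for $m=\ell-(k-1)-(n-j)$ are $m=\alpha n+\beta(n-1)$ with $\alpha\ge 0$, $\beta\ge -1$ (requiring $\alpha\ge 1$ when $\beta=-1$). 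This relaxation lowers the threshold on $m$ to $(n-1)(2n-3)$ and hence gives $l_S(v_k,v_j)\le 2n^2-4n+k+2-j$ for $2\le j\le n-1$, strictly below the value at $v_n$; the case $j=1$ (empty final segment, threshold $2n^2-4n+k+1$) is similar. With that computation inserted the proof is complete, and Lemma \ref{le2.13} is not even needed at the end, since $l_S(v_m)=2n^2-4n+m+2$ is strictly increasing in $m$ and so the $k$-th smallest local base is $l_S(v_k)$.
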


\begin{lemma}{\bf (\cite{W.M})}\label{le2.14.02}%------
Let $S$ be a primitive nonpowerful signed digraph of order $n \geq
3$ with $D_{2}$ as its underlying digraph. Then we have:

(1) If the (only) two cycles of length $n-1$ of $S$ have different
signs, then $$l_{S}(k)\leq\left \{\begin{array}{ll}
  2n^{2}
-2n + k + 1,\ & \ 1\leq k\leq n-1;
\\ n^{2}-n,\ & \ k= n. \end{array}\right.$$

(2) If the (only) two cycles of length $n-1$ of $S$ have the same
sign, then $l_{S}(k) = 2n^{2}-4n + k + 1$ for $1\leq k \leq n$.
\end{lemma}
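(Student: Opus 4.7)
The plan is to build on Lemma~\ref{le2.6}, Lemma~\ref{le2.12}, and Lemma~\ref{le2.13}, applied to the specific cycle structure of $D_2$. The digraph $D_2$ has cycle length set $C(S)=\{n-1,n\}$ with $\phi(n-1,n)=(n-1)(n-2)$, and exactly three directed cycles: the $n$-cycle $C_n=(v_n,v_{n-1},\ldots,v_1,v_n)$, the $(n-1)$-cycle $C^{(1)}_{n-1}$ using the chord $(v_1,v_{n-1})$ and avoiding $v_n$, and the $(n-1)$-cycle $C^{(2)}_{n-1}$ using the chord $(v_2,v_n)$ and avoiding $v_1$. The extra chord $(v_2,v_n)$ over $D_1$ both creates $C^{(2)}_{n-1}$ and provides a length-one shortcut in $C(S)$-walks, which is the source of the savings in the bound.

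For part~(2), both $(n-1)$-cycles share one sign, so Lemma~\ref{le2.101} forces the distinguished pair to be $C_n$ together with some $C^{(j)}_{n-1}$, and the template $nC^{(j)}_{n-1}\cup(n-1)C_n$ yields an SSSD loop of length $n(n-1)$ at every vertex of $C^{(j)}_{n-1}$. Since $V(C^{(1)}_{n-1})\cup V(C^{(2)}_{n-1})=V(S)$, an SSSD loop of length $n(n-1)$ is available at every vertex. I would then mirror the $D_1$ proof of Lemma~\ref{le2.14.01}, computing $d_{C(S)}(v_k,v_j)$ with the $(v_2,v_n)$ shortcut applied wherever possible. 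The gain of exactly one arc over the $D_1$ calculation, combined via Lemma~\ref{le2.12}, produces the equality $l_S(k)=2n^2-4n+k+1$, precisely one less than the $D_1$ formula $2n^2-4n+k+2$.

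For part~(1), the $(n-1)$-cycles have opposite signs and the argument splits on the parity of $n-1$. If $n-1$ is odd, Lemma~\ref{le2.101}(2) makes $(C^{(1)}_{n-1},C^{(2)}_{n-1})$ a distinguished pair, and $(n-1)C^{(1)}_{n-1}\cup(n-1)C^{(2)}_{n-1}$ is an SSSD loop of length $(n-1)^2$ at every vertex of $V(C^{(1)}_{n-1})\cap V(C^{(2)}_{n-1})=\{v_2,\ldots,v_{n-1}\}$; if $n-1$ is even, Lemma~\ref{le2.101}(1) pairs $C_n$ with the negative $(n-1)$-cycle for an SSSD loop of length $n(n-1)$. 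For $1\leq k\leq n-1$, the standard estimate via Lemma~\ref{le2.12} yields $l_S(k)\leq 2n^2-2n+k+1$. For $v_n$, the sharp bound $l_S(v_n)\leq n^2-n$ is built on the identity $1+(n-1)^2+(n-2)=n(n-1)$: from $v_n$ step to $v_{n-1}$, perform $n-1$ full laps at $v_{n-1}$ (of $C^{(1)}_{n-1}$ for one walk and of $C^{(2)}_{n-1}$ for the other), and return to $v_n$ along $v_{n-1}\to v_{n-2}\to\cdots\to v_2\to v_n$ via the chord $(v_2,v_n)$; the two walks have the same length $n(n-1)$ but opposite signs, and arbitrary length extensions come from inserting full copies of $C_n$ or $C^{(j)}_{n-1}$.

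The hardest step is turning the length-$n(n-1)$ SSSD construction at $v_n$ into the uniform bound $l_S(v_n)\leq n^2-n$ over every target $v_j$, since a blind appeal to Lemma~\ref{le2.12} picks up a strictly positive $\mathrm{exp}_S(v_n)$ and overshoots. The remedy is to build the SSSD pair by hand for each $v_j$ and each $t\geq n^2-n$, routing the sign-distinguishing laps through a vertex shared by both $(n-1)$-cycles (such as $v_{n-1}$ or $v_2$) and exploiting the chord $(v_2,v_n)$ to make the return leg have length exactly $n-2$. The parity split of $n-1$ enters only through the sign-parity bookkeeping—verifying that $(\mathrm{sgn}\,C^{(1)}_{n-1})^{n-1}\neq(\mathrm{sgn}\,C^{(2)}_{n-1})^{n-1}$ when needed—which follows a uniform template once the distinguished cycle pair is fixed.
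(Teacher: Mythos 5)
The paper never proves this lemma---it is imported verbatim from \cite{W.M}---so your proposal must stand on its own. Its skeleton is sound: $D_2$ has exactly the three cycles you describe, $\mbox{exp}_{D_{1,2}}(v_m)=n^2-3n+1+m$ by Lemma \ref{le2.16} (with $k=1$, $i=2$), and adding the loop length $n(n-1)$ via Lemma \ref{le2.12} does give the upper bound $2n^2-4n+1+m$ of part (2). But there are two genuine gaps. The first is a sign error at the centerpiece of part (1): the two walks of length $n(n-1)$ obtained by running $n-1$ laps of $C^{(1)}_{n-1}$ in one walk and $n-1$ laps of $C^{(2)}_{n-1}$ in the other have signs $(\mathrm{sgn}\,C^{(1)}_{n-1})^{n-1}$ and $(-\mathrm{sgn}\,C^{(1)}_{n-1})^{n-1}$, which \emph{coincide} whenever $n-1$ is even; your closing claim that one need only ``verify'' their inequality is not verifiable in that case, and your fallback (pairing $C_n$ with the negative $(n-1)$-cycle and invoking Lemma \ref{le2.12}) yields only $l_S(v_n)\le(n-1)^2+n(n-1)=2n^2-3n+1$, nowhere near $n^2-n$. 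The repair is to swap an \emph{odd} number of laps: compare $a$ copies of $C^{(1)}_{n-1}$ with $a-1$ copies of $C^{(1)}_{n-1}$ plus one of $C^{(2)}_{n-1}$, so the signs differ by exactly one factor of $-1$ for every parity. Since every walk leaving $v_n$ immediately reaches $v_{n-1}$, which lies on all three cycles, and since $t-(n-j)-(n-1)\ge(n-1)(n-2)=\phi(n-1,n)$ for every target $v_j$ as soon as $t\ge n^2-n$, this one-for-one swap produces SSSD pairs of \emph{every} length $t\ge n^2-n$ to \emph{every} target, which is what $l_S(v_n)\le n^2-n$ actually requires; a single SSSD pair of length $n(n-1)$, which is all your construction delivers, proves nothing by itself (you flag this as ``the hardest step'' but do not carry it out, and the parity split then becomes unnecessary).

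The second gap is in part (2), which asserts an equality while every tool you invoke (Lemma \ref{le2.12}, $C(S)$-walk distances, Frobenius numbers) produces only upper bounds. You must additionally show that no SSSD pair of length $2n^2-3n$ exists from $v_n$ to the extremal target: decompose any such walk into a fixed path plus $a$ copies of $C_n$ and $b$ copies of $(n-1)$-cycles, use $\gcd(n-1,n)=1$ together with the definition of $\phi(n-1,n)$ to force $(a,b)$ to be unique, and conclude that all such walks carry the same sign because the two $(n-1)$-cycles do. Exactly this argument appears in this paper's proof of Theorem \ref{th6.1}, of which part (2) is the special case $k=1$, $i=2$; but ``mirror the $D_1$ proof'' cannot supply it, since Lemma \ref{le2.14.01} is likewise cited here without proof.
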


\begin{lemma}{\bf (\cite{W.M})}\label{le2.14}%------
Let $S$ be a primitive
  nonpowerful signed digraph with order $n\geq6$ whose underlying
  digraph is neither isomorphic to $D_1$ nor to $D_2$, then $l_{S}(k)\leq 2n^{2}- 6n +k +
  4$ for $1\leq k \leq n$.
\end{lemma}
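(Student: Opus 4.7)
The plan is to prove the bound for $k=1$ and then propagate to all $1\leq k\leq n$ via Lemma \ref{le2.13}. Concretely, I will exhibit a vertex $u$ with $l_S(u)\leq 2n^2-6n+5$; Lemma \ref{le2.13} then gives $l_S(k)\leq l_S(u)+(k-1)\leq 2n^2-6n+k+4$ for each such $k$. Everything therefore reduces to bounding the local base at a single, carefully chosen vertex.

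To bound $l_S(u)$ I would feed Lemma \ref{le2.12} with an SSSD loop at $u$ coming from the distinguished cycle pair supplied by Lemma \ref{le2.101}. Pick such a pair $C_1$, $C_2$ of lengths $p_1$, $p_2$; then $p_2C_1$ and $p_1C_2$ are closed walks of common length $p_1p_2$ with opposite signs. Attaching a short closed walk $W$ based at $u$ that passes through both $C_1$ and $C_2$ yields two SSSD closed walks at $u$, namely $W\cup p_2C_1$ and $W\cup p_1C_2$, of common length $r=L(W)+p_1p_2$. Lemma \ref{le2.12} then gives $l_S(u)\leq \mathrm{exp}_S(u)+r$, while Lemma \ref{le2.6} combined with Lemma \ref{le2.2} yields $\mathrm{exp}_S(u)\leq d_{C(S)}(u)+(p_1-1)(p_2-1)$.

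The main obstacle, and the heart of the argument, is turning the hypothesis ``the underlying digraph is neither $D_1$ nor $D_2$'' into quantitative cycle-length bounds. The point is that $D_1$ (respectively $D_2$) is essentially characterized among primitive digraphs of order $n$ as the extremal structure whose cycle spectrum is exactly $\{n,n-1\}$ (respectively $\{n,n-1,n-1\}$). Excluding both should force $S$ to contain either a cycle of length at most $n-2$, or an additional cycle of length $n-1$ with the right sign/parity behavior, giving a distinguished pair $(C_1,C_2)$ satisfying $p_1+p_2\leq 2n-3$ and placed so that $d_{C(S)}(u)$ remains linear in $n$. I expect this to require a finite but delicate case analysis split by whether the girth of $S$ is at most $n-2$ or equals $n-1$, and by the placement of the chords of a longest cycle. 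Once these structural facts are in hand, the arithmetic of the preceding paragraph gives $l_S(u)\leq 2n^2-6n+5$, and Lemma \ref{le2.13} completes the proof.
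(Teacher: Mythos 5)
This lemma is quoted in the paper from \cite{W.M} without proof, so there is no internal argument to compare against; judged on its own terms, your skeleton (reduce to the minimum local base via Lemma \ref{le2.13}, then bound $l_S(u)$ at one well-chosen vertex via Lemma \ref{le2.12} using an SSSD closed walk manufactured from a distinguished cycle pair) is the right one, but the proposal has two genuine gaps. First, the entire structural step is deferred rather than carried out, and the one quantitative claim you do commit to --- that excluding $D_1$ and $D_2$ yields a distinguished pair with $p_1+p_2\le 2n-3$ --- is false: the digraph $D_{2,1}$ (a Hamilton cycle plus one chord creating an $(n-2)$-cycle) is neither $D_1$ nor $D_2$, has $C(S)=\{n-2,n\}$, hence $p_1+p_2=2n-2$, and by Theorem \ref{th6.1} it attains $l_S(k)=2n^2-6n+4+k$. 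It is precisely the extremal configuration for this lemma, so it cannot be excluded or weakened away.

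Second, because the bound is met with equality by $D_{2,1}$, any proof must be sharp there, and your chain of estimates is not. Taking $u=v_1$, which lies on both cycles (so $L(W)=0$ and the SSSD loop has length $r=p_1p_2=n(n-2)$), Lemmas \ref{le2.6} and \ref{le2.2} give only $\mathrm{exp}_S(v_1)\le d_{C(S)}(v_1)+(n-3)(n-1)$ with $d_{C(S)}(v_1)=n-3$, i.e.\ $\mathrm{exp}_S(v_1)\le n^2-3n$, whence $l_S(v_1)\le 2n^2-5n$, overshooting the target $2n^2-6n+5$ by $n-5$. The value actually needed, $\mathrm{exp}_{D_{2,1}}(v_1)=n^2-4n+5$, must be extracted by the reachable-set growth argument of Lemma \ref{le2.16}, not by the generic Frobenius bound, and the same refinement is required in every near-extremal family ($\mathscr{L}$, the $F$'s, the $\mathscr{B}_j$'s). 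There is also an unaddressed dichotomy: when two cycles of the same length carry opposite signs, the efficient SSSD loop has length equal to that single cycle length rather than $p_1p_2$ (compare Lemma \ref{le2.14.02}(1)), and your uniform choice $r=L(W)+p_1p_2$ ignores this case entirely. So the plan names the correct tools, but as written neither the structural classification nor the arithmetic closes.
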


\begin{lemma}{\bf (\cite{YMS})}\label{le2.14.0}%------
\ (i) Let $A$ be a primitive
  nonpowerful square sign pattern with order $n\geq6$. If $C(S(A))=\{p, q\} \
  (p<q\leq n, p+q>n)$ and the cycles with the same length have the same sign in $S(A)$, then $p(2q-1)
\leq l(A) \leq 2p(q-1)+n.$

(i) Let $n\geq 6$, and let $p, q$ be integers satisfying $p< q\leq
n, p+q\geq n$ and $\gcd(p, q)=1$. Then there exists a primitive
nonpowerful square sign pattern matrix $A$ with order $n$ such that
$C(S(A))=\{p, q\}$ and $l(A)=k$ for each $k\in [p(2q-1) ,
2p(q-1)+n]$, namely, $$[p(2q-1) , 2p(q-1)+n] \subseteq E_{n}^{l}$$
where $E_{n}^{l}=\{l(A)|A$ is a $n\times n$ primitive nonpowerful
sign pattern matrix $\}$.
\end{lemma}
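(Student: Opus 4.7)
The plan is to establish the upper bound of part (i) by combining the $\exp$ estimate of Lemma~\ref{le2.6} with the SSSD-walk boost of Lemma~\ref{le2.12}. Since $C(S(A))=\{p,q\}$ with $p+q>n$, a cardinality argument shows that every $p$-cycle $C_{1}$ and every $q$-cycle $C_{2}$ share at least one vertex $w$. Because cycles of equal length have equal sign in $S(A)$, Lemma~\ref{le2.101} allows me to pick $C_{1}, C_{2}$ forming a distinguished cycle pair, so that $qC_{1}$ and $pC_{2}$ are SSSD walks of length $pq$ based at $w$. For an arbitrary vertex $u$, bridging by shortest paths $u\to w$ and $w\to u$ yields SSSD walks from $u$ to $u$ of length at most $pq+2(n-1)$, and Lemma~\ref{le2.12} then gives $l_{S}(u)\leq \exp_{S}(u)+pq+2(n-1)$. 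Lemma~\ref{le2.6} together with $\phi(p,q)=(p-1)(q-1)$ (Lemma~\ref{le2.2}) and a sharper bound on $d_{C(S)}(u)$ routed through the common vertex $w$ then collapse to $l(A)\leq 2p(q-1)+n$.

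For the lower bound $p(2q-1)\leq l(A)$ in part (i), I would exhibit an extremal vertex pair $(u,v)$ at which the local base is forced to be large. Because $\gcd(p,q)=1$ and same-length cycles carry the same sign, any walk of length $t$ from $u$ to $v$ decomposes (modulo a fixed shortest skeleton) into $a$ copies of the $p$-cycle and $b$ copies of the $q$-cycle, contributing sign $s_{1}^{a}s_{2}^{b}$ with $ap+bq$ essentially equal to $t$. Producing SSSD walks therefore means finding two solutions of the same $ap+bq=t'$ whose parities $(a\bmod 2, b\bmod 2)$ yield opposite signs; the distinguished-pair identity $s_{1}^{q}=-s_{2}^{p}$ dictates which $(a,b)$ work, and a Frobenius-type counting on $t'$ pins the threshold at $p(2q-1)$.

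For part (ii) I would give an explicit construction. Take the underlying digraph to be a Hamilton $n$-cycle $v_{1}v_{2}\cdots v_{n}v_{1}$ together with one chord placed so that the only cycle lengths produced are $p$ and $q$; the conditions $p<q\leq n$ and $p+q\geq n$ guarantee such a chord placement exists while preserving strong connectivity, and Lemma~\ref{le2.5} together with $\gcd(p,q)=1$ gives primitivity. Assigning signs so that the unique $p$-cycle and the unique $q$-cycle form a distinguished pair renders $S(A)$ primitive nonpowerful. To sweep $k$ across $[p(2q-1),2p(q-1)+n]$, I would shift the chord (or equivalently reposition a single negative arc) by one vertex along the Hamilton cycle; a recomputation of $d_{C(S)}$ and of the minimum SSSD-walk length for the shifted digraph shows this shifts $l(A)$ by exactly one, and the interval endpoints match the bounds of part (i).

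The hard part will be the lower bound in (i): the combinatorial argument must simultaneously control the Frobenius constraint on lengths and the sign constraint on cycle-multiplicity parities. A naive separation of the two constraints gives only bounds of the form $\exp_{S}(u,v)+(p-1)$ or $\exp_{S}(u,v)+(q-1)$, which fall short of $p(2q-1)$. Pinpointing the extremal pair $(u,v)$ at which both constraints bind simultaneously, and verifying that no shorter pair of SSSD walks can exist there, is the substantive step of the proof.
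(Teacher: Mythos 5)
First, be aware that the paper offers no proof of this lemma: it is quoted verbatim from the companion paper \cite{YMS}, so there is no in-paper argument to measure yours against; it must stand on its own, and as written it does not. For the upper bound in (i), your chain $l_{S}(u)\leq \exp_{S}(u)+pq+2(n-1)$ together with $\exp_{S}(u)\leq d_{C(S)}(u)+(p-1)(q-1)$ would require $d_{C(S)}(u)\leq q-p-n+1\leq 0$ to reach $2p(q-1)+n$, so the $2(n-1)$ detour through the common vertex $w$ is fatal by itself. Even if you base the SSSD pair at $u$ (taking $r=pq$ in Lemma \ref{le2.12}), you still need $d_{C(S)}(u)\leq n+q-p-1$ and, more delicately, an SSSD closed walk of length $pq$ at \emph{every} vertex, including vertices lying only on $p$-cycles; the phrase ``a sharper bound \dots then collapse to $2p(q-1)+n$'' is precisely the content that is missing. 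You also concede that the lower bound $p(2q-1)\leq l(A)$ is not proved.

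The most concrete error is in part (ii). Shifting the single chord by one vertex along the Hamilton $n$-cycle yields a digraph isomorphic to the original (rotate all labels by one), so $l(A)$ cannot change at all, let alone decrease by exactly one; likewise, repositioning the unique negative arc does not alter which cycles are negative when each cycle length is realized by a single cycle. The mechanism that actually sweeps the interval is exhibited in this very paper: the family $D_{k,i}$ of Lemma \ref{le2.16} keeps the Hamilton cycle and \emph{adds} $i$ parallel chords $(v_{1},v_{n-k}),\dots,(v_{i},v_{n-k+i-1})$, preserving $C(S)=\{n-k,\,n\}$, and Theorem \ref{th6.1} gives $l_{S_{k,i}}(v_{n})=(2n-2)(n-k)+1+n-i$, which drops by exactly one with each added chord; for $p=n-k$, $q=n$ the values at $i=1$ and $i=k+1$ are $2p(q-1)+n$ and $p(2q-1)$ respectively, so the interval is filled (with analogous families needed when $q<n$). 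Your construction should be replaced by one of this type, with the one-step decrement verified by recomputing the extremal non-SSSD length as in the proof of Theorem \ref{th6.1}, not by a symmetry-breaking move that does not exist.
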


\begin{lemma} {\bf (\cite{YZ})}\label{th3.5} %------
Let $S$ be a primitive
  nonpowerful signed digraph of order $n\geq6$. If there exists some $k\ (1\leq k\leq n)$ such that
   $l_{S}(k)\geq \displaystyle
\frac{3}{2}n^{2}-3n+k+4$, then we have the results as follows:

(i)$|C(S)|=2$. Suppose $C(S)=\{p_{1},p_{2}\}(p_{1}<p_{2})$, then
$\gcd(p_{1},p_{2})=1, p_{1}+p_{2}>n;$

(ii) In $S$, all $p_{1}-$cycles have the same sign, all
$p_{2}-$cycles have the same sign, and every pair of $p_{1}-$cycle
and $p_{2}-$cycle form a distinguished cycle pair.
\end{lemma}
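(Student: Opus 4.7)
The plan is to argue by contrapositive: assuming $l_S(k)\ge \frac{3}{2}n^{2}-3n+k+4$ for some $k$, I will show that each of the stated conclusions must hold, since violating any of them forces a strictly smaller upper bound on $l_S(k)$. The central machinery is Lemma \ref{le2.12}, which gives $l_S(v)\le \mathrm{exp}_S(v)+r(v)$ whenever $v$ admits a pair of SSSD closed walks of length $r(v)$, combined with Lemma \ref{le2.6}'s bound $\mathrm{exp}_S(v)\le d_{C(S)}(v)+\phi(p_{1},\ldots,p_{u})$. Lemma \ref{le2.13} is then used to pass from the per-vertex quantities $l_S(v)$ to the order statistic $l_S(k)$, introducing the additive $+k$ term.

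For part (i), first suppose $|C(S)|\ge 3$. By Lemma \ref{le2.5} the cycle lengths $p_{1}<p_{2}<\cdots<p_{u}$ have overall gcd $1$, and one extracts a coprime pair among them; with three or more cycle lengths available in $\{1,\ldots,n\}$, both the Frobenius number $\phi(p_{1},\ldots,p_{u})$ and the $C(S)$-distance $d_{C(S)}(v)$ admit bounds of order $O(n^{2})$ with a strictly smaller leading constant than $\frac{3}{2}$, which combined via Lemmas \ref{le2.12} and \ref{le2.6} (using any distinguished pair from Lemma \ref{le2.101} to produce short SSSD closed walks) contradicts the hypothesis. Next, since $C(S)=\{p_{1},p_{2}\}$, primitivity together with Lemma \ref{le2.5} gives $\gcd(p_{1},p_{2})=1$ automatically. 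Finally, suppose $p_{1}+p_{2}\le n$. Then $\phi(p_{1},p_{2})=(p_{1}-1)(p_{2}-1)\le (p_{1}-1)(n-p_{1}-1)$, which is maximized near $p_{1}=n/2$ with value $\le \tfrac{1}{4}(n-2)^{2}$, and a walk covering both cycles has length at most roughly $n$; plugging into Lemmas \ref{le2.6}, \ref{le2.12} yields $l_S(v)<\frac{3}{2}n^{2}-3n+k+4$, again contradicting the hypothesis.

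For part (ii), assume two $p_{1}$-cycles $C_{1}'$ and $C_{1}''$ have opposite signs (the case of $p_{2}$-cycles is symmetric). Then from any vertex $v$ we can build a pair of closed walks, each going to a common vertex of the two $p_{1}$-cycles, traversing one of $C_{1}'$ or $C_{1}''$ a chosen number of times and the other cycles of $C(S)$ the balancing number of times, and returning; the resulting pair of SSSD closed walks has length of order at most $p_{1}p_{2}+O(n)$ but strictly shorter than the generic distinguished-pair construction, and the improved $r(v)$ in Lemma \ref{le2.12} drives $l_S(v)$ below the threshold. Hence all $p_{1}$-cycles share a sign and all $p_{2}$-cycles share a sign. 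Lemma \ref{le2.101} now supplies at least one distinguished pair $(C_{1},C_{2})$ with $|C_{i}|=p_{i}$; but the distinguished-pair condition depends only on $\mathrm{sgn}(C_{1})$, $\mathrm{sgn}(C_{2})$ and the parities of $p_{1},p_{2}$, so every $p_{1}$-cycle paired with every $p_{2}$-cycle is also distinguished.

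The main obstacle is Step 1 (and its refined version in Step 2), namely producing upper bounds on $l_S(v)$ that are sharp enough to fall strictly below the threshold $\frac{3}{2}n^{2}-3n+k+4$ whenever the structural hypothesis fails. This requires careful case analysis in $(p_{1},p_{2})$, tight control of $d_{C(S)}(v)$ in each configuration, and explicit construction of short SSSD closed walks at $v$; the passage from $l_S(v)$ to the order statistic $l_S(k)$ via Lemma \ref{le2.13} is routine but must be done consistently across all $k$ to account for the extra $+k$ in the threshold.
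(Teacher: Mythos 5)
This statement is Lemma~\ref{th3.5}, which the paper imports from the reference [YZ] without proof, so there is no in-paper argument to compare your proposal against; it has to be judged on its own. As written, it is a strategy outline rather than a proof: every decisive step is the assertion of a quantitative upper bound on $l_S(v)$ that is never derived, and you concede as much in your closing paragraph. Two of these assertions are not merely unverified but fail under the naive application of the lemmas you invoke. First, in the case $|C(S)|\ge 3$ you claim the relevant quantities have ``a strictly smaller leading constant than $\frac{3}{2}$,'' but the direct combination of Lemma~\ref{le2.6} and Lemma~\ref{le2.12} with a distinguished pair $(C_i,C_j)$ gives $l_S(u)\le d_{C(S)}(u)+\phi(\cdots)+p_ip_j+O(n)$; with, say, $C(S)=\{n-2,n-1,n\}$ both $\phi$ and $p_ip_j$ can be of order $n^2$, so the bound is roughly $2n^2$, which does not contradict the hypothesis. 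Ruling out $|C(S)|\ge 3$ is the crux of the lemma and requires a genuinely finer analysis (sharper Frobenius-type bounds for three or more cycle lengths, or shorter SSSD closed walks exploiting a third cycle length), none of which appears in your sketch.

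Second, the constants matter because the lemma is claimed for all $n\ge 6$. In part (ii), two opposite-signed $p_1$-cycles give SSSD closed walks at $u$ of length $r\le |W|+p_1$ where $W$ is a closed walk through both cycles, so crudely $r=O(n)$; combined with $\exp_S(u)\le d_{C(S)}(u)+(p_1-1)(p_2-1)\le n^2+O(n)$ this only drops below $\frac{3}{2}n^2-3n+4$ once $n$ is roughly $16$ or larger. A correct proof must therefore control $d_{C(S)}(u)$ and $r$ much more tightly than ``of order $n$,'' and must do so case by case in $(p_1,p_2)$. Your treatment of $p_1+p_2\le n$ has the same defect (you bound $\phi$ but wave at the walk lengths), and your reduction from $l_S(v)$ to $l_S(k)$ via Lemma~\ref{le2.13} is fine but only after one has the bound $l_S(1)\le \frac{3}{2}n^2-3n+4$ at some vertex, which is exactly what is missing. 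The one part that is complete is the final observation that, once all $p_1$-cycles share a sign and all $p_2$-cycles share a sign, Lemma~\ref{le2.101} forces every $p_1$-cycle/$p_2$-cycle pair to be distinguished; that step is correct. Overall: the architecture is plausible, but the proof has a genuine gap at its quantitative core.
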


\begin{lemma} {\bf (\cite{YZ})}\label{th4.3} %------
Let $S$ be a primitive
  nonpowerful signed digraph of order $n\geq6$. If there exists some $k\ (1\leq k\leq n)$ such that
   $l_{S}(k)\geq \displaystyle
\frac{3}{2}n^{2}-3n+k+4$, then
 $$l_{S}(k)\leq \left \{\begin{array}{ll}
 (2n-1)p_{1},\ &
 p_{2}=n, 1\leq k\leq p_{1};\\
\\ (2n-2)p_{1}+k,\ &
 p_{2}=n, p_{1}+1\leq k\leq n;\\
\\ n+2p_{1}(p_{2}-1),\ & p_{2}\leq n-1, 1\leq k\leq n.\end{array}\right.$$
 where
 $C(S)=\{p_{1},p_{2}\}, p_{1}<p_{2}$.
\end{lemma}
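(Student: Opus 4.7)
The plan is to apply Lemma \ref{th3.5} to pin down the cycle structure of $S$ and then split into the Hamiltonian subcase $p_2=n$ and the non-Hamiltonian subcase $p_2\le n-1$. In the setup, by Lemma \ref{th3.5} we have $C(S)=\{p_1,p_2\}$ with $\gcd(p_1,p_2)=1$, $p_1+p_2>n$, all $p_1$-cycles sharing a common sign, all $p_2$-cycles sharing a common sign, and every pair of a $p_1$-cycle $C_1$ and a $p_2$-cycle $C_2$ forming a distinguished pair (in the sense of Lemma \ref{le2.101}). By Lemma \ref{le2.2}, $\phi(p_1,p_2)=(p_1-1)(p_2-1)$, and the distinguished property means that the walks $p_2C_1$ and $p_1C_2$ have common length $p_1p_2$ but opposite signs. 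Hence at any vertex $u$ lying on both $C_1$ and $C_2$ there is a pair of SSSD closed walks of length $p_1p_2$, and combining Lemmas \ref{le2.12} and \ref{le2.6} yields
\[
l_S(u)\le \exp_S(u)+p_1p_2\le d_{C(S)}(u)+(p_1-1)(p_2-1)+p_1p_2.
\]

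For Case~1 ($p_2=n$), the cycle $C_2$ is Hamiltonian, so every vertex lies on $C_2$; in particular, each of the $p_1$ vertices of any fixed $p_1$-cycle $C_1$ lies on both cycles. For such a $u$, any walk from $u$ meets both cycles already at $u$, so $d_{C(S)}(u,v)\le n-1$ for every $v\in V(S)$. The inequality above then gives
\[
l_S(u)\le (n-1)+(p_1-1)(n-1)+p_1n=(2n-1)p_1,
\]
so at least $p_1$ vertices satisfy this bound and the sorted sequence yields $l_S(k)\le (2n-1)p_1$ for $1\le k\le p_1$. For $p_1+1\le k\le n$, iterating Lemma \ref{le2.13} from $l_S(p_1)\le (2n-1)p_1$ gives $l_S(k)\le (2n-1)p_1+(k-p_1)=(2n-2)p_1+k$, as desired.

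For Case~2 ($p_2\le n-1$), the structural information established in the setup is exactly the hypothesis of Lemma \ref{le2.14.0}(i): $p_1<p_2\le n$, $p_1+p_2>n$, and cycles of the same length share a common sign. Hence $l(S)\le 2p_1(p_2-1)+n$. Because $l_S(k)\le l_S(n)=l(S)$ for every $1\le k\le n$, we conclude $l_S(k)\le n+2p_1(p_2-1)$, finishing the third branch of the statement.

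The bulk of the genuine work lies in Case~1: the main obstacle is to show that the $p_1$ vertices of a $p_1$-cycle really all enjoy the bound $(2n-1)p_1$, which hinges on the observation that the Hamiltonian cycle $C_2$ automatically places each such vertex on both cycles and thus collapses $d_{C(S)}(u,\cdot)$ down to ordinary distance. Case~2 reduces by direct citation to the already-established base bound of Lemma \ref{le2.14.0}(i); the only verification there is that the hypotheses of that lemma match the conclusion of Lemma \ref{th3.5}, which is immediate.
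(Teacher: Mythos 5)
Your proof is correct. Note that this paper does not actually prove Lemma \ref{th4.3} --- it is imported without proof from the reference \cite{YZ} --- so there is no in-paper argument to compare against; but your derivation is sound and self-contained given the quoted lemmas. The key step checks out: when $p_2=n$, each of the $p_1$ vertices of a $p_1$-cycle lies on the Hamiltonian cycle as well, so $d_{C(S)}(u,\cdot)$ collapses to ordinary distance and $(n-1)+(p_1-1)(n-1)+np_1=(2n-1)p_1$, which matches the extremal values realized in Theorem \ref{th6.1}; the remaining branches follow correctly from Lemma \ref{le2.13} and Lemma \ref{le2.14.0}(i).
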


\begin{lemma}\label{le2.16}%------
Let $D_{k, i}$ consists of cycle $C_{n}=(v_{1}$, $v_{n}$, $v_{n-1}$,
$v_{n-2}$, $\ldots$, $v_{2}$, $v_{1})$ and arcs $(v_{1}$,
$v_{n-k})$, $(v_{2}$, $v_{n-k+1})$, $\ldots$, $(v_{i}$,
$v_{n-k+i-1})\ (1\leq i\leq \min\{k+1, n-k-1\})$ (see Fig. 3.1)
where $\gcd(n, n-k)=1$. Then $\mbox{exp}_{D_{k,
i}}(m)=\mbox{exp}_{D_{k, i}}(v_{m})=(n-2)(n-k)+1-i+m$ for $1\leq
m\leq n$.

\end{lemma}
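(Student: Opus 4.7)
My plan is to compute $\mbox{exp}_{D_{k,i}}(v_m, v_b)$ explicitly for every target $v_b$ and then maximize over $v_b$. First I would observe that $C(D_{k,i})=\{n,n-k\}$: the unique $n$-cycle is $C_n$ itself, while each chord $(v_j,v_{n-k+j-1})$ closes with the arcs $v_{n-k+j-1}\to v_{n-k+j-2}\to\ldots\to v_j$ into an $(n-k)$-cycle. The constraint $i\leq\min\{k+1,n-k-1\}$ together with a short check rules out any simple cycle using two or more chords, so $C(D_{k,i})$ has exactly these two lengths. Since $\gcd(n,n-k)=1$, Lemma~\ref{le2.5} gives primitivity. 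Next I track labels modulo $n$: a cycle arc decreases the label by $1$, while each chord arc increases it by $n-k-1$. Hence any walk of length $\ell$ from $v_m$ to $v_b$ that uses exactly $c$ chord steps must satisfy $\ell\equiv m-b-kc\pmod n$, and since $\gcd(k,n)=1$, letting $c$ range over $\{0,1,\ldots,n-1\}$ the residues $m-b-kc$ cover every class modulo $n$ exactly once.

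The heart of the argument is the minimum length $\ell_{\min}(c; v_m, v_b)$ of a walk from $v_m$ to $v_b$ with exactly $c$ chord arcs. Decomposing such a walk into $c+1$ cycle-arc blocks $s_0,s_1,\ldots,s_c$ separated by the $c$ chords, with chord tails $v_{j_1},\ldots,v_{j_c}\in\{v_1,\ldots,v_i\}$, and minimizing each block's length (a cycle-path from the previous chord head or from $v_m$, to the next chord tail or to $v_b$) over the choice of $j_r$, a telescoping calculation gives
$$\ell_{\min}(c;v_m,v_b)=\left\{\begin{array}{ll}m+c(n-k)-b, & \mbox{if } b\leq n-k+i-1,\\ m+c(n-k)+n-b, & \mbox{if } b\geq n-k+i,\end{array}\right.$$
provided $c$ is large enough that the choices of $j_1$ and $j_c$ decouple (the small cases $c=0,1$ need a short separate check). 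The extra $+n$ in the second branch records the single wrap around $C_n$ forced when $v_b$ lies past the last reachable chord head $v_{n-k+i-1}$.

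The feasible walk lengths from $v_m$ to $v_b$ are exactly $\bigcup_{c=0}^{n-1}\{\ell_{\min}(c)+jn:j\geq 0\}$, and since their residues modulo $n$ exhaust $\mathbb{Z}/n\mathbb{Z}$ the smallest $T$ with all $\ell\geq T$ feasible is $T=\max_{c}\ell_{\min}(c)-n+1$, attained at $c=n-1$. Plugging in the formula above and maximizing over $v_b$, the worst target is $v_{n-k+i}$ (the first vertex in the wrap range), which beats the best non-wrap target $v_1$ by $k-i+1\geq 0$; the two coincide when $i=k+1$ since then $n-k+i\equiv 1\pmod n$. Substituting $b=n-k+i$ and simplifying via $(n-1)(n-k)=(n-2)(n-k)+(n-k)$ gives $\mbox{exp}_{D_{k,i}}(v_m)=(n-2)(n-k)+1-i+m$. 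Finally, since this value is strictly increasing in $m$, the vertices sorted by local primitive index line up exactly as $v_1,v_2,\ldots,v_n$, so $\mbox{exp}_{D_{k,i}}(m)=\mbox{exp}_{D_{k,i}}(v_m)$ as claimed.

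The main obstacle is justifying the telescoped formula in the second paragraph: one must verify that the per-block-minimizing choices of $j_r$ are mutually compatible (not just pointwise feasible), and that no intermediate block secretly wraps around the cycle, which requires a careful case split based on the position of $v_b$ relative to the chord-head range $\{v_{n-k},\ldots,v_{n-k+i-1}\}$. Once this combinatorial bookkeeping is complete, the rest of the argument is algebraic simplification.
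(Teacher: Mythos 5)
Your argument is correct and reaches the right value, but it takes a genuinely different route from the paper. You compute, for every ordered pair $(v_m,v_b)$, the full set of feasible walk lengths by stratifying walks according to the number $c$ of chord arcs used, tracking the residue $\ell\equiv m-b-kc\pmod n$, and minimizing over the block decomposition to get $\ell_{\min}(c)$; the exponent then drops out as $\max_c\ell_{\min}(c)-n+1$. The paper instead proves only two facts and interpolates: an upper bound $\mbox{exp}_{D_{k,i}}(v_1)\leq(n-2)(n-k)+2-i$ via the growth of the reachability sets $\bigcup_t R_{t(n-k)-(i-2)}(v_1)$ (each union must gain a vertex or the digraph is not primitive), and the exact value $\mbox{exp}_{D_{k,i}}(v_n)=\mbox{exp}_{D_{k,i}}(v_n,v_{n-k+i})$ via Lemma~\ref{le2.6} combined with a nonexistence argument for one specific length that reduces to the definition of $\phi(n,n-k)$; the inequality $\mbox{exp}_{D_{k,i}}(v_m)\leq\mbox{exp}_{D_{k,i}}(v_1)+m-1$ then forces all intermediate values since the two endpoints differ by exactly $n-1$. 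Your approach is more self-contained and yields every local index $\mbox{exp}_{D_{k,i}}(v_m,v_b)$, not just the row maxima, at the cost of the bookkeeping you flag: verifying that the per-block-optimal chord tails are compatible (taking $j_1=1\leq m$ for the first block and $j_c$ governed only by the position of $v_b$ works for $c\geq 2$, middle blocks never wrap since $i\leq n-k-1$), and checking that the deviations of $\ell_{\min}(0),\ell_{\min}(1)$ from the telescoped formula (at most one extra traversal of $C_n$ per mismatched constraint) cannot overtake the maximum at $c=n-1$. The paper's sandwich via $d(v_m,v_1)=m-1$ and the Frobenius bound avoids essentially all of this; its nonexistence step for the single length $n+k-i+\phi(n,n-k)-1$ from $v_n$ to $v_{n-k+i}$ is in effect the $c$-stratified residue argument specialized to your worst target $v_{n-k+i}$.
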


\unitlength 1mm \linethickness{0.4pt}
\begin{picture}(94.33,37.33)
\bezier{156}(41.33,20.00)(39.33,6.67)(65.00,7.33)
\bezier{172}(41.33,20.00)(39.33,36.67)(65.00,35.00)
\bezier{176}(65.33,35.00)(93.33,36.33)(91.33,20.33)
\bezier{168}(91.33,20.33)(92.33,5.67)(65.33,7.33)
\put(61.00,7.33){\line(5,3){30.00}}
\put(84.33,9.33){\line(-2,3){17.11}}
\put(49.33,8.67){\circle*{1.49}} \put(91.00,15.33){\circle*{1.49}}
\put(61.33,7.33){\circle*{1.49}} \put(84.33,9.33){\circle*{1.49}}
\put(91.33,25.33){\circle*{1.33}} \put(67.67,35.00){\circle*{1.49}}
\put(41.67,15.33){\circle*{1.49}} \put(41.33,25.67){\circle*{1.33}}
\put(48.00,9.33){\vector(-2,1){3.00}}
\put(41.33,15.67){\vector(0,1){5.67}}
\put(42.33,28.00){\vector(3,4){1.75}}
\put(52.67,34.67){\vector(1,0){5.33}}
\put(68.00,35.00){\vector(1,0){6.33}}
\put(91.33,25.33){\vector(0,-1){6.67}}
\put(86.00,32.00){\vector(4,-3){3.00}}
\put(90.33,14.33){\vector(-2,-3){1.78}}
\put(74.33,7.33){\vector(-1,0){4.33}}
\put(60.67,7.33){\vector(-1,0){4.67}}
\put(49.33,8.67){\line(6,1){41.67}}
\put(49.33,9.00){\vector(1,0){3.00}}
\put(81.33,19.33){\vector(3,2){4.67}}
\put(77.67,19.67){\vector(-2,3){2.00}}
\put(47.67,6.33){\makebox(0,0)[cc]{$v_{1}$}}
\put(39.00,15.00){\makebox(0,0)[cc]{$v_{n}$}}
\put(37.00,25.67){\makebox(0,0)[cc]{$v_{n-1}$}}
\put(61.33,5.00){\makebox(0,0)[cc]{$v_{2}$}}
\put(85.33,7.00){\makebox(0,0)[cc]{$v_{i}$}}
\put(96.00,15.00){\makebox(0,0)[cc]{$v_{n-k}$}}
\put(98.33,26.00){\makebox(0,0)[cc]{$v_{n-k+1}$}}
\put(67.33,37.33){\makebox(0,0)[cc]{$v_{n-k+i-1}$}}
\put(65.67,0.00){\makebox(0,0)[cc]{Fig. 3.1. $D_{k,i}$}}
\end{picture}

\begin{proof}
It is easy to see $D_{k, i}$ is primitive by lemma \ref{le2.5}.
Also, it is not difficult to check that $$
R_{n-k-(i-2)}(v_{1})\supseteq\left \{\begin{array}{ll}
 \{v_{n}, v_{n-k}, v_{k}\},\ & i=1;\\
\\  \{v_{i-1}, v_{n-k+(i-1)}, v_{k+i-1}\},\ & 2\leq i\leq \min\{k+1,n-k-1\}.
\end{array}\right.$$

If $|\bigcup_{t=1}^{j} R_{t(n-k)-(i-2)}(v_{1})|<n$, we assert that
$$|\bigcup_{t=1}^{j} R_{t(n-k)-(i-2)}(v_{1})\setminus
\bigcup_{t=1}^{j-1} R_{t(n-k)-(i-2)}(v_{1})|\geq 1.$$ Otherwise,
$|\bigcup_{t=1}^{+\infty} R_{t(n-k)-(i-2)}(v_{1})|<n$, which
contradicts that $D_{k, i}$ is primitive.

By the assertion above, we get $|\bigcup_{t=1}^{n-2}
R_{t(n-k)-(i-2)}(v_{1})|=n$ because of $|R_{1}(v_{1})|\geq 3$. So
$\mbox{exp}_{D_{k, i}}(v_{1})\leq (n-2)(n-k)+2-i.$

If $i-1<k$, then $n-k+i-1<n$ and $d(C(D_{k, i}))=d_{C(D_{k,
i})}(v_{n},v_{n-k+i})=n+k-i.$ By Lemma \ref{le2.6}, we get
$$\mbox{exp}(D_{k, i})\leq
d(C(D_{k, i}))+\phi(n,n-k)$$$$=d_{C(D_{k,
i})}(v_{n},v_{n-k+i})+(n-1)(n-k-1)=n+k-i+(n-1)(n-k-1).$$

Now we prove that there is no directed walk of length
$n+k-i+\phi(n,n-k)-1$ from $v_{n}$ to $v_{n-k+i}$. Otherwise,
suppose $W$ is a directed walk of length $n+k-i+\phi(n,n-k)-1$ from
$v_{n}$ to $v_{n-k+i}$. Let $P_{1}$ denote the path from $v_{n}$ to
$v_{n-k+i}$ on cycle $C_{n}$, then $$\mid P_{1}\mid = d(v_{n},
v_{n-k+i})=k-i$$ and $P_{1}$ meet only $n$-cycle not any
$(n-k)$-cycle. $W$ must contain $P_{1}\bigcup C_{n}$, some
$(n-k)-$cycles and some $n$-cycles, namely
$$n+k-i+\phi(n,n-k)-1=k-i+n+a_{1}n+a_{2}(n-k)\ \ (a_{j}\geq
0,\ j=1,2)$$ and $$\phi(n,n-k)-1=a_{1}n+a_{2}(n-k)\ \ (a_{j}\geq 0,\
j=1,2)$$ which contradicts the definition of $\phi(n,n-k)$. So there
is no directed walk of length $n+k-i+\phi(n,n-k)-1$ from $v_{n}$ to
$v_{n-k+i}$, and further, we have
$$\mbox{exp}(D_{k, i})=\mbox{exp}_{D_{k, i}}(v_{n})=\mbox{exp}_{D_{k, i}}(v_{n},v_{n-k+i})=n+k-i+(n-1)(n-k-1)
.$$ Notice that $$
\mbox{exp}_{D_{k, i}}(v_{m})\leq \mbox{exp}_{D_{k, i}}(v_{1})+m-1, 1\leq
m\leq n,$$ $$n+k-i+(n-1)(n-k-1)-((n-2)(n-k)+2-i)=n-1,$$ thus $$\mbox{exp}_{D_{k, i}}(v_{1})\geq
\mbox{exp}_{D_{k, i}}(v_{n})-(n-1)=(n-2)(n-k)+2-i,$$ so we have
$$\mbox{exp}_{D_{k, i}}(v_{1})=(n-2)(n-k)+2-i$$ and
$$\mbox{exp}_{D_{k, i}}(m)=\mbox{exp}_{D_{k, i}}(v_{m})=(n-2)(n-k)+1-i+m\ (1\leq m\leq n).$$

If $k=i-1$, then $n-k+(i-1)=n$, we have $d(C(D_{k, i}))=d_{C(D_{k,
i})}(v_{n},v_{1})=n-1.$ Analogous to the proof of the case
$n-k+(i-1)<n$, we can prove
$$\mbox{exp}(D_{k, i})=\mbox{exp}_{D_{k, i}}(v_{n})=\mbox{exp}_{D_{k, i}}(v_{n},v_{1})=
d_{C(D_{k, i})}(v_{n},v_{1})+\phi(n,n-k)=(n-1)(n-k),$$
$$\mbox{exp}_{D_{k, i}}(v_{1})=(n-2)(n-k)+2-i,$$ and $\mbox{exp}_{D_{k, i}}(m)=
\mbox{exp}_{D_{k, i}}(v_{m})=(n-2)(n-k)+1-i+m$ for $1\leq m\leq n.\
\ \ \ \Box$

\end{proof}

If $n$ is odd, let $\mathscr{L}$ consist of cycle $C_{n}=(v_{1}$,
$v_{n}$, $v_{n-1}$, $v_{n-2}$, $v_{n-3}$, $\ldots$, $v_{2}$,
$v_{1})\ (n\geq 6)$ and arcs $(v_{1}$, $v_{n-2})$, $(v_{3}$,
$v_{n})$. For any positive integer $n$, let $F$ consist of cycle
$C_{n-1}=(v_{1}$, $v_{n}$ , $v_{n-1}$, $v_{n-3}$, $v_{n-4}$,
$\ldots$, $v_{2}$, $v_{1})\ (n\geq 6)$ and arcs $(v_{1}$,
$v_{n-2})$, $(v_{n-2}$, $v_{n-3})$; let $F_{1}$ consist of cycle
$(v_{1}$, $v_{n-1}$, $v_{n-2}$, $\ldots$, $v_{2}$, $v_{1})$ and arcs
$(v_{1}$, $v_{n-2})$, $(v_{2}$, $v_{n})$, $(v_{n}$, $v_{n-1})$; let
$F_{2}$ consist of cycle $(v_{1}$, $v_{n}$, $v_{n-2}$, $v_{n-3}$,
$v_{n-4}$, $\ldots$, $v_{2}$, $v_{1})$ and arcs $(v_{1}$,
$v_{n-2})$, $(v_{n}$, $v_{n-1})$, $(v_{n-1}$, $v_{n-3})$; let
$F_{3}$ consist of cycle $(v_{1} $, $v_{n-2}$, $v_{n-3}$, $v_{n-4}$,
$\ldots$, $v_{2}$, $v_{1})$ and arcs $(v_{1}$, $v_{n-1})$,
$(v_{n-1}$, $v_{n-2})$, $(v_{1}$, $v_{n})$, $(v_{n}$, $v_{n-2})$;
let $F_{i}^{'}$ $(2\leq i\leq n-3)$ consist of cycle $(v_{1}$,
$v_{n-1}$, $v_{n-2}$, $\ldots$, $v_{2}$, $v_{1})$ and arcs $(v_{1}$,
$v_{n-2})$, $(v_{i+1}$, $v_{n})$, $(v_{n}$, $v_{i-1})$; let $F_{4}$
consist of cycle $(v_{1}$, $v_{n-1}$, $v_{n-2}$, $\ldots$, $v_{2}$,
$v_{1})$ and arcs $(v_{1}$, $v_{n-2})$, $(v_{1}$, $v_{n})$,
$(v_{n}$, $v_{n-3})$; let $F_{5}$ consist of cycle $(v_{1}$,
$v_{n-1}$, $v_{n-2}$, $\ldots$, $v_{2}$, $v_{1})$ and arcs $(v_{1}$,
$v_{n-2})$, $(v_{2}$, $v_{n})$, $(v_{n}$, $v_{n-2})$; let $F_{6}$
consist of cycle $(v_{1}$, $v_{n-1}$, $v_{n-2}$, $\ldots$, $v_{2}$,
$v_{1})$ and arcs $(v_{1}$, $v_{n})$, $(v_{n}$, $v_{n-3})$,
$(v_{2}$, $v_{n-1})$; let $F_{7}$ consist of cycle $(v_{1}$,
$v_{n-1}$, $v_{n-2}$, $\ldots$, $v_{2}$, $v_{1})$ and arcs $(v_{1}$,
$v_{n-2})$, $(v_{3}$, $v_{n})$, $(v_{n}$, $v_{n-1})$; let
$\mathscr{B}_{1}$ consist of cycle $C_{n}=(v_{1}$, $v_{n}$,
$v_{n-1}$, $\ldots$, $v_{2}$, $v_{1})$ and arcs $(v_{1}$,
$v_{n-3})$, $(v_{3}$, $v_{n-1})$; let $\mathscr{B}_{2}$ consist of
cycle $C_{n}=(v_{1}$, $v_{n}$, $v_{n-1}$, $\ldots$, $v_{2}$,
$v_{1})$ and arcs $(v_{1}$, $v_{n-3})$, $(v_{4}$, $v_{n})$; let
$\mathscr{B}_{3}$ consist of cycle $C_{n}=(v_{1}$, $v_{n}$,
$v_{n-1}$, $\ldots$, $v_{2}$, $v_{1})$ and arcs $(v_{1}$,
$v_{n-3})$, $(v_{2}$, $v_{n-2})$, $(v_{4}$, $v_{n})$; let
$\mathscr{B}_{3}$ consist of cycle $C_{n}=(v_{1}$, $v_{n}$,
$v_{n-1}$, $\ldots$, $v_{2}$, $v_{1})$ and arcs $(v_{1}$,
$v_{n-3})$, $(v_{2}$, $v_{n-2})$, $(v_{4}$, $v_{n})$; let
$\mathscr{B}_{4}$ consist of cycle $C_{n}=(v_{1}$, $v_{n}$,
$v_{n-1}$, $\ldots$, $v_{2}$, $v_{1})$ and arcs $(v_{1}$,
$v_{n-3})$, $(v_{3}$, $v_{n-1})$, $(v_{4}$, $v_{n})$.

\begin{lemma}\label{le2.16.1}%------
(1) Suppose that $n$ is odd. Then
$\mathrm{exp}_{\mathscr{L}}(k)=\mathrm{exp}_{\mathscr{L}}(v_{k})=(n-1)(n-3)+k-1$
for $1\leq k\leq n$.

(2) $$\mbox{exp}_{F}(m)=\mbox{exp}_{F}(v_{m})=\left
\{\begin{array}{ll}
 n^{2}-5n+7+m,\ & {\mbox {if}}\ 1\leq m\leq n-2;
\\ n^{2}-5n+6+m,\ & {\mbox {if}}\ n-1\leq m\leq n. \end{array}\right.$$

(3) $\mbox{exp}_{F_{1}}(k)=\mbox{exp}_{F_{1}}(v_{k})=n^{2}-5n+6+k$
for $1\leq k\leq n$.

(4) $$\mbox{exp}_{F_{2}}(k)=\mbox{exp}_{F_{2}}(v_{k})=\left
\{\begin{array}{ll}
 n^{2}-5n+7+k,\ & {\mbox {if}}\ 1\leq k\leq n-2;
\\ n^{2}-5n+6+k,\ & {\mbox {if}}\ n-1\leq k\leq n. \end{array}\right.$$

(5) $$\mbox{exp}_{F_{3}}(k)=\mbox{exp}_{F_{3}}(v_{k})=\left
\{\begin{array}{ll}
 n^{2}-5n+6+k,\ & {\mbox {if}}\ 1\leq k\leq n-1;
\\ n^{2}-5n+5+k,\ & {\mbox {if}}\  k=n. \end{array}\right.$$

(6) $$\mbox{exp}_{F_{i}^{'}}(k)=\left \{\begin{array}{ll}
 \mbox{exp}_{F_{i}^{'}}(v_{k})=n^{2}-5n+6+k,\ & {\mbox {if}}\ 1\leq k\leq i;
 \\ \mbox{exp}_{F_{i}^{'}}(v_{n})=n^{2}-5n+5+k,\ & {\mbox {if}}\ k=i+1;
 \\ \mbox{exp}_{F_{i}^{'}}(v_{k-1})=n^{2}-5n+5+k,\ & {\mbox {if}}\ i+2\leq k\leq n. \end{array}\right.$$

(7) $$\mbox{exp}_{F_{4}}(k)=\left \{\begin{array}{ll}
 \mbox{exp}_{F_{4}}(v_{k})=n^{2}-5n+6+k,\ & {\mbox {if}}\ 1\leq k\leq n-2;
 \\ \mbox{exp}_{F_{4}}(v_{n})=n^{2}-4n+4,\ & {\mbox {if}}\ k=n-1;
 \\ \mbox{exp}_{F_{4}}(v_{n-1})=n^{2}-4n+5,\ & {\mbox {if}}\ k=n. \end{array}\right.$$

(8) $$\mbox{exp}_{F_{5}}(k)=\left \{\begin{array}{ll}
 \mbox{exp}_{F_{5}}(v_{k})=n^{2}-5n+6+k,\ & {\mbox {if}}\ 1\leq k\leq n-1;
\\ \mbox{exp}_{F_{5}}(v_{n-1})=n^{2}-4n+5,\ & {\mbox {if}}\ k=n. \end{array}\right.$$

(9) $$\mbox{exp}_{F_{6}}(k)=\left \{\begin{array}{ll}
 \mbox{exp}_{F_{6}}(v_{k})=n^{2}-5n+6+k,\ & {\mbox {if}}\ 1\leq k\leq n-2;
 \\ \mbox{exp}_{F_{6}}(v_{n})=n^{2}-4n+4,\ & {\mbox {if}}\ k=n-1;
 \\ \mbox{exp}_{F_{6}}(v_{n-1})=n^{2}-4n+5,\ & {\mbox {if}}\ k=n. \end{array}\right.$$

(10) $\mbox{exp}_{F_{7}}(k)=\mbox{exp}_{F_{7}}(v_{k})= n^{2}-5n+5+k$
for $1\leq k\leq n$.

(11)
$\mbox{exp}_{\mathscr{B}_{1}}(k)=\mbox{exp}_{\mathscr{B}_{1}}(v_{k})=(n-1)(n-4)+k$
for $1\leq k\leq n$.

(12)
$\mbox{exp}_{\mathscr{B}_{2}}(k)=\mbox{exp}_{\mathscr{B}_{2}}(v_{k})=(n-3)^{2}+n+k-6$
for $1\leq k\leq n$.

(13)
$\mbox{exp}_{\mathscr{B}_{3}}(k)=\mbox{exp}_{\mathscr{B}_{3}}(v_{k})=(n-3)^{2}+n+k-6$
for $1\leq k\leq n$.

(14)
$\mbox{exp}_{\mathscr{B}_{4}}(k)=\mbox{exp}_{\mathscr{B}_{4}}(v_{k})=(n-3)^{2}+n+k-6$
for $1\leq k\leq n$.

\end{lemma}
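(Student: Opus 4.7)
The plan is to treat all fourteen parts by a single uniform template, modeled directly on the proof of Lemma \ref{le2.16}. For each digraph $S$ in the list I first identify the cycle length set $C(S)$. Every digraph is built from one (near-)Hamiltonian cycle together with two or three extra chords, and the chords combine with subpaths of that long cycle to form a small explicit collection of directed cycles whose lengths I read off by inspection. In each case the relevant part of $C(S)$ reduces to a two-element set $\{p,q\}$ with $\gcd(p,q)=1$: for $\mathscr{L}$ one uses that $n$ is odd so that $\gcd(n-2,n)=1$; for $F,F_1,\ldots,F_7$ one gets $\{n-2,n-1\}$ or $\{n-1,n\}$; for $\mathscr{B}_1,\ldots,\mathscr{B}_4$ one gets $\{n-3,n\}$ (with the necessary coprimality). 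Primitivity then follows from Lemma \ref{le2.5}, and Lemma \ref{le2.2} evaluates the Frobenius number as $\phi(p,q)=(p-1)(q-1)$.

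For the upper bound, Lemma \ref{le2.6} gives
$$\mbox{exp}_S(v_m,v_j)\le d_{C(S)}(v_m,v_j)+\phi(p,q).$$
I compute $d_{C(S)}(v_m,\cdot)$ by direct inspection: the shortest walk from $v_m$ that meets both a $p$-cycle and a $q$-cycle is built by moving along the Hamiltonian cycle until reaching a vertex where the two cycle types meet, exploiting the fact that every short cycle shares many arcs with the Hamiltonian cycle. The maximum over $v_j$ is attained at a specific target vertex $w_m$, yielding the claimed upper bound on $\mbox{exp}_S(v_m)$.

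For the matching lower bound I repeat the argument at the end of the proof of Lemma \ref{le2.16}: I fix the witness pair $(v_m,w_m)$ and suppose a walk of length one less exists. Such a walk decomposes as the unique shortest $v_m$-to-$w_m$ path along the Hamiltonian cycle plus a nonnegative combination $a_1p+a_2q$ of cycle lengths; subtracting the fixed distance forces $\phi(p,q)-1=a_1p+a_2q$, contradicting the definition of $\phi(p,q)$. The elementary inequalities $\mbox{exp}_S(v_m)\le \mbox{exp}_S(v_1)+(m-1)$ (obtained by first walking $m-1$ steps along the Hamiltonian cycle from $v_m$ to $v_1$) and $\mbox{exp}_S(v_1)\ge \mbox{exp}_S(v_n)-(n-1)$ then propagate the value across all $m$, and a numerical check that the two bounds pinch together forces equality throughout.

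The main obstacle is bookkeeping: parts (2), (4), (5), (6), (7), (8), (9) feature formulas that drop by one at the last one or two vertices, reflecting the fact that these vertices are incident to an \emph{extra} chord that shortens the relevant $C(S)$-walk by exactly one step. For each such case I must separately identify which vertex is exceptional and verify by an explicit construction that a walk of the proposed shorter length actually exists. Each individual verification is entirely analogous to the $D_{k,i}$ argument of Lemma \ref{le2.16}; the work lies solely in keeping track of the correct constants across all fourteen digraph families rather than in introducing any new ideas.
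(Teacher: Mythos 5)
There is a genuine gap in the upper-bound half of your template. Lemma \ref{le2.6} gives $\mbox{exp}_S(v_m)\le d_{C(S)}(v_m)+\phi(p,q)$, but $d_{C(S)}(v_m)\ge \max_j d(v_m,v_j)$, and for the vertex $v_1$ in every one of these digraphs that eccentricity is of order $n$, while the claimed formulas require $\mbox{exp}_S(v_1)-\phi(p,q)$ to be a small constant. Concretely, for $F$ one has $\phi(n-1,n-2)=(n-2)(n-3)$ and the asserted value $\mbox{exp}_F(v_1)=(n-2)(n-3)+2$, yet already $d(v_1,v_2)=n-3$, so $d_{C(F)}(v_1)\ge n-3$ and Lemma \ref{le2.6} only yields $\mbox{exp}_F(v_1)\le (n-2)(n-3)+(n-3)$, which is strictly weaker for $n\ge 6$; the same happens for $D_{k,i}$, $\mathscr{L}$ and the $\mathscr{B}_j$. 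Lemma \ref{le2.6} is tight only at the extremal vertex (here $v_n$). For $v_1$ the paper uses a different mechanism, taken from the proof of Lemma \ref{le2.16}: one computes an explicit small reachable set such as $R_{2}(v_1)=\{v_{n-1},v_{n-3}\}$, shows that $\bigl|\bigcup_{t=0}^{j}R_{t(n-2)+2}(v_1)\bigr|$ grows by at least one at each step until it equals $n$ (else primitivity fails), and concludes $\mbox{exp}_F(v_1)\le (n-3)(n-2)+2$. Without this (or an equivalent) sharp upper bound at $v_1$, your sandwich does not close: the propagated lower bound $\mbox{exp}_S(v_1)\ge \mbox{exp}_S(v_n)-(n-1)$ does match the claimed value, but your only upper bound at $v_1$ strictly exceeds it, so equality is not forced there nor at the intermediate vertices.

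A secondary caution: the decomposition ``shortest path plus $a_1p+a_2q$'' that drives your lower bound is not valid for an arbitrary pair $(v_m,w_m)$; it holds only for a carefully chosen witness pair (in Lemma \ref{le2.16}, from $v_n$ to $v_{n-k+i}$) where the geometry forces every walk of the relevant length to contain the unique short path together with the entire long cycle before it can pick up a short cycle. Since one witness per digraph suffices once the $v_1$ bound is in place, this is repairable, but it should be stated and verified for the chosen pair rather than asserted in general.
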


\begin{proof}
(1) It is not difficult to check that $R_{n-3}(v_{1})= \{v_{2},
v_{4}, v_{n}\}.$ Similar to the proof of Lemma \ref{le2.16}, we can
prove $|\bigcup_{t=0}^{n-3} R_{t(n-2)+n-3}(v_{1})|=n,$ so
$\mathrm{exp}(v_{1})\leq(n-1)(n-3)$ and
$\mathrm{exp}(v_{n})\leq\mathrm{exp}(v_{1})+d(v_{n}, v_{1})\leq
(n-1)(n-2),$ and further, we get
$\mathrm{exp}(v_{n})=\mathrm{exp}(v_{n}, v_{1})=(n-1)(n-2).$ So
$\mathrm{exp}(v_{1})=(n-1)(n-3)$ and
$$\mathrm{exp}(k)=\mathrm{exp}(v_{k})=(n-1)(n-3)+k-1\ (1\leq k\leq
n).$$

(2) It is not difficult to check that
$$ R_{t(n-2)+2}(v_{1})=\left \{\begin{array}{ll}
\{v_{n-1}, v_{n-3}\},\ & t=0;\\
\\ \{v_{n}, v_{n-2}, v_{n-3}\},\ & t=1.
\end{array}\right.$$

Similar to the proof of Lemma \ref{le2.16}, if $|\bigcup_{t=0}^{j}
R_{t(n-2)+2}(v_{1})|<n$, we can prove
$$|\bigcup_{t=0}^{j} R_{t(n-2)+2}(v_{1}) \setminus
\bigcup_{t=0}^{j-1} R_{t(n-2)+2}(v_{1})|\geq 1\ (j\geq3).$$ So we
have $|\bigcup_{t=0}^{n-3} R_{t(n-2)+2}(v_{1})|=n$ because of
$|R_{2}(v_{1})\bigcup R_{(n-2)+2}(v_{1})|= 4$ and
$$\mbox{exp}_{F}(v_{1})\leq (n-3)(n-2)+2.$$
It is easy to check that $d(C(F))=d_{C(F)}(v_{n}, v_{n-1})=n.$ By
Lemma \ref{le2.6}, thus we have
$$\mbox{exp}_{F}(n)=\mbox{exp}_{F}(v_{n})\leq d(C(F))+\phi (n-1,
n-2)=n^{2}-4n+6.$$ Because of
$d_{C(F)}(v_{n-2})=d_{C(F)}(v_{n-2},v_{n-1})=n-1,$ just as the proof
of
 Lemma \ref{le2.16}, we get
$$\mbox{exp}_{F}(n)=\mbox{exp}_{F}(v_{n})=\mbox{exp}_{F}(v_{n},
v_{n-1})=n^{2}-4n+6,$$
$$\mbox{exp}_{F}(v_{n-2})=d_{C(F)}(v_{n-2},v_{n-1})+\phi (n-1,
n-2)=(n-3)(n-2)+n-1,$$ and get $\mbox{exp}_{F}(v_{1})=(n-3)(n-2)+2,$
$$\mbox{exp}_{F}(m)=\mbox{exp}_{F}(v_{m})=\left \{\begin{array}{ll}
 n^{2}-5n+7+m,\ & {\mbox {if}}\ 1\leq m\leq n-2;
\\ n^{2}-5n+6+m,\ & {\mbox {if}}\ n-1\leq m\leq n. \end{array}\right. $$

In a same way, we can prove (3)-(14). $\ \ \ \ \Box$

\end{proof}

\section{Gaps and characterizations of some digraphs with given local bases}

\begin{theorem} \label{th6.1} %------
Let $\gcd(n$, $n-k)=1$ and $S_{k, i}$ be a primitive
  nonpowerful signed digraph with underlying digraph $D_{k, i}\ (1\leq i\leq \min\{k+1, n-k-1\})$. If all
$(n-k)$-cycles have the same sign, then $l_{S_{k,
i}}(m)=l_{S_{i}}(v_{m})=(2n-2)(n-k)+1-i+m\ (1\leq m\leq n).$
\end{theorem}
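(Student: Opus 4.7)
The plan is to prove $l_{S_{k,i}}(v_m) = \ell$, where $\ell := (2n-2)(n-k) + 1 - i + m$, by matching upper and lower bounds. Both bounds exploit the distinguished cycle pair $(C_n, C_{n-k})$ guaranteed by Lemma \ref{le2.101} combined with the hypothesis that all $(n-k)$-cycles carry a common sign; this forces the key identity $(\mathrm{sgn}\,C_n)^{n-k} = -(\mathrm{sgn}\,C_{n-k})^n$.

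For the upper bound $l_{S_{k,i}}(v_m, v_j) \le \ell$ for every $v_j$, I would exploit the vertex $v_1$, which lies simultaneously on $C_n$ and on the $(n-k)$-cycle $C_{n-k}$ through the chord $(v_1, v_{n-k})$. By Lemma \ref{le2.16}, $\mathrm{exp}_{D_{k,i}}(v_1) = (n-2)(n-k) + 2 - i$, so for every $v_j$ there is a walk $P_2$ from $v_1$ to $v_j$ of every length at least this value. Writing $P_1$ for the length-$(m-1)$ segment of $C_n$ from $v_m$ to $v_1$, I would construct, for each $t \ge \ell$, two length-$t$ walks from $v_m$ to $v_j$: walk $W_A$ traverses $C_n$ exactly $n-k$ times at $v_m$, then follows $P_1$, then follows $P_2$ of length $t - n(n-k) - (m-1) \ge \mathrm{exp}_{D_{k,i}}(v_1)$; walk $W_B$ first follows $P_1$, then traverses $C_{n-k}$ exactly $n$ times at $v_1$, then follows the same $P_2$. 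Their signs are $\mathrm{sgn}(W_A) = (\mathrm{sgn}\,C_n)^{n-k}\cdot\mathrm{sgn}(P_1)\cdot\mathrm{sgn}(P_2)$ and $\mathrm{sgn}(W_B) = \mathrm{sgn}(P_1)\cdot(\mathrm{sgn}\,C_{n-k})^n\cdot\mathrm{sgn}(P_2)$, which differ by a factor of $-1$ by the distinguished-pair relation, so $\{W_A, W_B\}$ is an $SSSD$ pair at length $t$.

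For the lower bound I would take $v_{j^*} = v_{n-k+i}$ and argue that $l_{S_{k,i}}(v_m, v_{j^*}) \ge \ell$. A degree-balance analysis in the underlying multigraph $D_{k,i}$ yields that every walk from $v_m$ to $v_{j^*}$ has length $t = d + xn + y(n-k)$, with $d = d_{C_n}(v_m, v_{j^*})$ (namely $m + k - i$ if $m \le n-k+i-1$ and $m - n + k - i$ otherwise) and nonnegative integers $(x, y)$ subject to the realizability condition ``$y \ge 1 \Rightarrow x \ge 1$'' precisely when $m \ge n-k+i$ (the regime in which the $C_n$-direct path from $v_m$ to $v_{j^*}$ avoids the $(n-k)$-cycle vertex set $\{v_1, \ldots, v_{n-k+i-1}\}$). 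The sign of such a walk factors as $\mathrm{sgn}(\mathrm{direct})\cdot(\mathrm{sgn}\,C_n)^x\cdot(\mathrm{sgn}\,C_{n-k})^y$, so two same-length representations form an $SSSD$ pair iff they differ by $(\Delta x, \Delta y) = (s(n-k), -sn)$ with $s$ odd. A Frobenius-type bookkeeping over the residue classes of $t \pmod n$ shows that the worst residue ($y_0 = n-1$) first admits an $SSSD$ pair at a specific $T_{\max}$; the $n$-spacing of residue classes then gives $l_{S_{k,i}}(v_m, v_{j^*}) \ge T_{\max} - (n-1)$, and a direct computation in both regimes confirms $T_{\max} - (n-1) = \ell$. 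Checking $t = \ell - 1$ explicitly, the lone candidate pair either has $x < 0$ or violates the realizability constraint, leaving a single representation and hence a single sign. The principal difficulty lies in uniformly managing this case analysis for the realizability constraint, and in verifying that $v_{n-k+i}$ rather than any other target attains $\max_{v_j} l_{S_{k,i}}(v_m, v_j)$; the latter holds because for any other $v_j$ a chord-based shortcut path supplies an additional $SSSD$-producing representation and strictly reduces the corresponding $T_{\max}$.
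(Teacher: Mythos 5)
Your upper bound is sound and is essentially the paper's: your explicit pair $W_A, W_B$ (pumping $C_n$ a total of $n-k$ times at $v_m$ versus $C_{n-k}$ a total of $n$ times at $v_1$, then appending a common walk of length $\ge \mathrm{exp}_{D_{k,i}}(v_1)$) is exactly the content of Lemma \ref{le2.12} applied at $v_1$ together with Lemma \ref{le2.16} and $d(v_m,v_1)=m-1$. Note that you do need to justify, as the paper does in its first line, that $C_n$ together with an $(n-k)$-cycle is forced to be a distinguished cycle pair: Lemma \ref{le2.101} only guarantees \emph{some} distinguished pair exists, and one must rule out a pair of two $(n-k)$-cycles (impossible here since all $(n-k)$-cycles share a sign and two equal odd lengths with equal signs, or two equal even lengths, cannot form such a pair).

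The lower bound is where your plan has a genuine gap. The core mechanism is right (at the critical length the representation $t = d + xn + y(n-k)$ is unique among realizable nonnegative pairs, so both walks pick up the same cycles and, since all $(n-k)$-cycles share a sign, the same sign), but you propose to run it for every $m$, which forces the ``realizability constraint'' case analysis that you yourself flag as the principal difficulty and do not carry out; the claim that at $t=\ell-1$ ``the lone candidate pair either has $x<0$ or violates the realizability constraint'' is precisely the assertion that needs proof, uniformly in $m$, and it is delicate because for $m\le i$ the walk can leave $v_m$ along a chord. The paper avoids all of this: it proves the non-existence of an $SSSD$ pair only for the single pair $(v_n, v_{n-k+i})$ (where the unique path to $v_{n-k+i}$ is disjoint from every $(n-k)$-cycle, so the decomposition is forced to be $P\cup C_n$ plus cycles), obtaining $l_{S_{k,i}}(v_n)=(2n-2)(n-k)+1+n-i$, and then recovers all other values by a sandwich: the upper bounds give $l_{S_{k,i}}(v_m)\le l_{S_{k,i}}(v_1)+m-1$ while Lemma \ref{le2.13} gives $l_{S}(n)\le l_{S}(1)+n-1$, and the two endpoints already differ by exactly $n-1$, pinning every intermediate value. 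Two further points: your residue-class $T_{\max}-(n-1)$ bookkeeping is both logically murky and unnecessary --- $l_{S}(u,v)\ge 1+t_0$ for any single length $t_0$ admitting no $SSSD$ pair, which is what the explicit check at $t=\ell-1$ delivers directly; and your worry about whether $v_{n-k+i}$ attains $\max_{v_j} l_{S_{k,i}}(v_m,v_j)$ is a non-issue, since any one target giving the lower bound $\ell$ suffices once the upper bound $\ell$ is in hand.
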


\begin{proof}
Every pair of $(n-k)$-cycle and $n$-cycle form a distinguished cycle
pair because $S_{k, i}$ is a primitive
  nonpowerful signed digraph. By Lemmas \ref{le2.12}, \ref{le2.16}, we get $$l_{S_{k, i}}(v_{1})\leq
\mbox{exp}_{S_{k, i}}(v_{1})+n(n-k)=(2n-2)(n-k)+2-i.$$ Because of
$d(v_{m},\ v_{1})=m-1$, we have $l_{S_{k, i}}(v_{m})\leq l_{S_{k,
i}}(v_{1})+m-1$ for $1\leq m\leq n$.

{\bf Case 1} $i-1<k$, then $n-k+i-1<n$.

Now we prove that there is no pair of $SSSD$ walks of length
$(2n-2)(n-k)-i+n$ from $v_{n}$ to $v_{n-k+i}$.

Otherwise, suppose $W_{1}, W_{2}$ are a pair of $SSSD$ walks with
length $(2n-2)(n-k)-i+n$ from $v_{n}$ to $v_{n-k+i}$. Let $P$ be the
unique path from $v_{n}$ to $v_{n-k+i}$ on cycle $C_{n}$. Then each
$W_{j}\ (j=1, 2)$ must consists of $P\bigcup C_{n}$, some $n$-cycles
and some $(n-k)$-cycles, namely
$$|W_{j}|=(2n-2)(n-k)-i+n=n+k-i+a_{i}n+b_{i}(n-k)\ (a_{j}, b_{j}\geq 0,\ j=1, 2).$$
Because of $\gcd(n$, $n-k)=1$, so $(a_{1}-a_{2}
)n=(b_{2}-b_{1})(n-k),\ n|(b_{2}-b_{1}),\ (n-k)|(a_{1}-a_{2})$, and
then $b_{2}-b_{1}=nx,\ a_{1}-a_{2}=(n-k)x$ for some integer $x$.

We assert $x=0$.

If $x\geq 1$, then $b_{2}\geq n$, thus we have
$$(2n-2)(n-k)-i+n=n+k-i+a_{2}n+(b_{2}-n)(n-k)+n(n-k)$$ and $\phi(n, n-k)-1=a_{2}n+(b_{2}-n)(n-k),$ which
contradicts the definition of $\phi(n, n-k)$. In a same way, we can
get analogous contradiction when $x\leq -1$. Thus the assertion
$x=0$ is proved.

So $W_{1}, W_{2}$ have the same sign because $b_{2}=b_{1},\
a_{1}=a_{2}$ and all $(n-k)-$cycles have the same sign.  This
contradicts $W_{1}, W_{2}$ are a pair of $SSSD$ walks. Thus there
are no pair of $SSSD$ walks of length $(2n-2)(n-k)-i+n$ from $v_{n}$
to $v_{n-k+i}$, and so
$$l(S_{k, i})= l_{S_{k, i}}(v_{n})=(2n-2)(n-k)+1+n-i.$$

Because of $(2n-2)(n-k)+1+n-i-((2n-2)(n-k)+2-i)=n-1,$
$$l_{S_{i}}(v_{m})\leq l_{S_{i}}(v_{1})+m-1(1\leq m\leq n),$$ we get
$l_{S_{i}}(v_{1})\geq l_{S_{i}}(v_{n})-(n-1),$ so
$l_{S_{i}}(v_{1})=(n-2)(2n-k)+2-i,$ and thus we have
$l_{S_{i}}(m)=l_{S_{i}}(v_{m})=(2n-2)(n-k)+1-i+m$ for $1\leq m\leq
n$ by Lemma \ref{le2.13}.

{\bf Case 2} $k=i-1$, then $n-k+i-1=n$.

As the proof of case 1, we can prove there is no pair of $SSSD$
walks of length $(2n-2)(n-k)-i+n$ from $v_{n}$ to $v_{1}$, and
$l_{S_{k, i}}(v_{n})= (2n-2)(n-k)+1+n-i$, $$l_{S_{k, i}}(m)=l_{S_{k,
i}}(v_{m})=(2n-2)(n-k)+1-i+m\ (1\leq m\leq n).$$ $\ \ \ \Box$
\end{proof}

If $n$ is odd, let $\mathscr{T}$ be a primitive nonpowerful signed
digraph with underlying digraph $\mathscr{L}$, in which all
$(n-2)$-cycles have the same sign. For any positive integer $n$, let
$\mathscr{S}_{0}$ be a primitive nonpowerful signed digraph with
underlying digraph $F$, in which all $(n-1)$-cycles have the same
sign, all $(n-2)$-cycles have the same sign; let $\mathscr{S}_{1}$
be a primitive nonpowerful signed digraph with underlying digraph
$F_{1}$, in which all $(n-1)$-cycles have the same sign, all
$(n-2)$-cycles have the same sign; let $\mathscr{S}_{2}$ be a
primitive nonpowerful signed digraph with underlying digraph
$F_{2}$, in which all $(n-1)$-cycles have the same sign, all
$(n-2)$-cycles have the same sign; let $\mathscr{S}_{3}$ be a
primitive nonpowerful signed digraph with underlying digraph
$F_{3}$, in which all $(n-1)$-cycles have the same sign, all
$(n-2)$-cycles have the same sign; let $\mathscr{S}_{4}$ be a
primitive nonpowerful signed digraph with underlying digraph
$F_{4}$, in which all $(n-1)$-cycles have the same sign, all
$(n-2)$-cycles have the same sign; let $\mathscr{S}_{5}$ be a
primitive nonpowerful signed digraph with underlying digraph
$F_{5}$, in which all $(n-1)$-cycles have the same sign, all
$(n-2)$-cycles have the same sign; let $\mathscr{S}_{6}$ be a
primitive nonpowerful signed digraph with underlying digraph
$F_{6}$, in which all $(n-1)$-cycles have the same sign, all
$(n-2)$-cycles have the same sign; let $\mathscr{S}_{7}$ be a
primitive nonpowerful signed digraph with underlying digraph
$F_{7}$, in which all $(n-1)$-cycles have the same sign, all
$(n-2)$-cycles have the same sign; let $\mathscr{S}_{i}$ be a
primitive nonpowerful signed digraph with underlying digraph
$F^{'}_{i}$, in which all $(n-1)$-cycles have the same sign, all
$(n-2)$-cycles have the same sign; let $\mathscr{Q}_{1}$ be a
primitive nonpowerful signed digraph with underlying digraph
$\mathscr{B}_{1}$, in which all $(n-3)$-cycles have the same sign;
let $\mathscr{Q}_{2}$ be a primitive nonpowerful signed digraph with
underlying digraph $\mathscr{B}_{2}$, in which all $(n-3)$-cycles
have the same sign; let $\mathscr{Q}_{3}$ be a primitive nonpowerful
signed digraph with underlying digraph $\mathscr{B}_{3}$, in which
all $(n-3)$-cycles have the same sign; let $\mathscr{Q}_{4}$ be a
primitive nonpowerful signed digraph with underlying digraph
$\mathscr{B}_{4}$, in which all $(n-3)$-cycles have the same sign.

\begin{theorem} \label{th6.1.1} %------
(1) $l_{\mathscr{T}}(k)=l_{\mathscr{T}}(v_{k})=2n(n-3)+k+2\ (1\leq
k\leq n).$

(2)
$$l_{\mathscr{S}_{0}}(k)=l_{\mathscr{S}_{0}}(v_{k})=\left \{\begin{array}{ll}
 2n^{2}-8n+9+k,\ & \ 1\leq k\leq n-2;
\\ 2n^{2}-8n+8+k,\ & \ n-1\leq k \leq n. \end{array}\right.$$

(3)
$l_{\mathscr{S}_{1}}(k)=l_{\mathscr{S}_{1}}(v_{k})=2n^{2}-8n+8+k\
(1\leq k\leq n).$

(4)
$$l_{\mathscr{S}_{2}}(k)=l_{\mathscr{S}_{2}}(v_{k})=\left \{\begin{array}{ll}
 2n^{2}-8n+9+k,\ & \ 1\leq k \leq n-2;
\\ 2n^{2}-8n+8+k,\ & \ n-1\leq k \leq n. \end{array}\right.$$

(5)
$$l_{\mathscr{S}_{3}}(k)=l_{\mathscr{S}_{3}}(v_{k})=\left \{\begin{array}{ll}
 2n^{2}-8n+8+k,\ & \ 1\leq k \leq n-1;
\\ 2n^{2}-7n+7,\ & \ k= n. \end{array}\right.$$

(6)
$$l_{\mathscr{S}_{4}}(k)=\left \{\begin{array}{ll}
 l_{\mathscr{S}_{4}}(v_{k})=
2n^{2}-8n+8+k,\ & \ 1\leq k\leq n-2;
\\ l_{\mathscr{S}_{4}}(v_{n})=2n^{2}-7n+6,
\ & \ k=n-1;
\\l_{\mathscr{S}_{4}}(v_{n-1})=2n^{2}-7n+7,
\ & \ k=n. \end{array}\right.$$

(7)
$$l_{\mathscr{S}_{5}}(k)=l_{\mathscr{S}_{3}}(v_{k})=\left \{\begin{array}{ll}
 2n^{2}-8n+8+k,\ & \ 1\leq k \leq n-1;
\\ 2n^{2}-7n+7,\ & \ k= n. \end{array}\right.$$

(8)
$$l_{\mathscr{S}_{6}}(k)=\left \{\begin{array}{ll}
 l_{\mathscr{S}_{6}}(v_{k})=
2n^{2}-8n+8+k,\ & \ 1\leq k\leq n-2;
\\ l_{\mathscr{S}_{6}}(v_{n})=2n^{2}-7n+6,
\ & \ k=n-1;
\\l_{\mathscr{S}_{6}}(v_{n-1})=2n^{2}-7n+7,
\ & \ k=n. \end{array}\right.$$

(9)
$l_{\mathscr{S}_{7}}(k)=l_{\mathscr{S}_{7}}(v_{k})=2n^{2}-8n+7+k\
(1\leq k\leq n).$

(10)
$$l_{\mathscr{S}_{i}}(k)=\left \{\begin{array}{ll}
 l_{\mathscr{S}_{i}}(v_{k})=2n^{2}-8n+8+k,\ & \ 1\leq k\leq i;
\\ l_{\mathscr{S}_{i}}(v_{n})=2n^{2}-8n+8+k,
\ & \ k=i+1;
\\ l_{\mathscr{S}_{i}}(v_{k-1})=2n^{2}-8n+7+k,
\ & \ i+2\leq k\leq n. \end{array}\right.$$

(11)
$l_{\mathscr{Q}_{1}}(k)=l_{\mathscr{Q}_{1}}(v_{k})=2n^{2}-8n+4+k(1\leq
k\leq n).$

(12)
$l_{\mathscr{Q}_{2}}(k)=l_{\mathscr{Q}_{2}}(v_{k})=2n^{2}-8n+3+k\
(1\leq k\leq n).$

(13)
$l_{\mathscr{Q}_{3}}(k)=l_{\mathscr{Q}_{3}}(v_{k})=2n^{2}-8n+3+k\
(1\leq k\leq n).$

(14)
$l_{\mathscr{Q}_{4}}(k)=l_{\mathscr{Q}_{4}}(v_{k})=2n^{2}-8n+3+k\
(1\leq k\leq n).$

\end{theorem}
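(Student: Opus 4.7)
The plan is to adapt the proof of Theorem~\ref{th6.1} to each of the fourteen signed digraphs by running the same template. In each case the underlying digraph carries exactly two distinct cycle lengths $p_{1}<p_{2}$ with $\gcd(p_{1},p_{2})=1$: namely $p_{1}=n-2,\,p_{2}=n$ for $\mathscr{T}$ (which is why the hypothesis ``$n$ odd'' is imposed, ensuring $\gcd(n-2,n)=1$); $p_{1}=n-2,\,p_{2}=n-1$ for $\mathscr{S}_{0},\ldots,\mathscr{S}_{7}$ and for the family $\mathscr{S}_{i}$; and $p_{1}=n-3,\,p_{2}=n$ for $\mathscr{Q}_{1},\ldots,\mathscr{Q}_{4}$ (implicitly with $3\nmid n$). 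In every case $v_{1}$ lies simultaneously on a $p_{1}$-cycle and a $p_{2}$-cycle; since $S$ is primitive nonpowerful and all cycles of a given length share a sign, Lemma~\ref{le2.101} guarantees that any pair of a $p_{1}$-cycle and a $p_{2}$-cycle is a distinguished cycle pair. Consequently $p_{2}C_{p_{1}}$ and $p_{1}C_{p_{2}}$ based at $v_{1}$ form a pair of SSSD closed walks at $v_{1}$ of length $r=p_{1}p_{2}$.

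For the upper bound, Lemma~\ref{le2.12} then gives $l_{S}(v_{1})\le \mathrm{exp}_{S}(v_{1})+p_{1}p_{2}$, and the right-hand side, evaluated using the relevant part of Lemma~\ref{le2.16.1}, agrees with the value claimed for $k=1$; for instance in case~(1) one has $(n-1)(n-3)+(n-2)n=2n(n-3)+3$, matching $l_{\mathscr{T}}(1)=2n(n-3)+3$. The bound $l_{S}(v_{m})\le l_{S}(v_{1})+d(v_{m},v_{1})$ together with Lemma~\ref{le2.13} then propagates the upper bound to every $k$, after matching the indexing $l_{S}(k)=l_{S}(v_{\sigma(k)})$ to the ordering of the values $\mathrm{exp}_{S}(v_{m})$ supplied by Lemma~\ref{le2.16.1}.

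The matching lower bound is obtained by the Frobenius argument of Theorem~\ref{th6.1}. For each digraph, let $v^{*}$ be the vertex attaining the largest local $\mathrm{exp}$ in Lemma~\ref{le2.16.1} and let $u^{*}$ be a target such that the $C(S)$-walk from $v^{*}$ to $u^{*}$ is a unique path $P$ of length $d_{C(S)}(v^{*},u^{*})$. Suppose for contradiction that SSSD walks $W_{1},W_{2}$ from $v^{*}$ to $u^{*}$ exist with length one below the claimed $l_{S}(v^{*})$. Each $W_{j}$ must decompose as $P\cup a_{j}C_{p_{1}}\cup b_{j}C_{p_{2}}$ for nonnegative integers $a_{j},b_{j}$; subtracting the two length equations and using $\gcd(p_{1},p_{2})=1$ forces $a_{1}-a_{2}=p_{2}x$ and $b_{2}-b_{1}=p_{1}x$ for some integer $x$, and the definition of $\phi(p_{1},p_{2})$ excludes $x\ne 0$ exactly as in Theorem~\ref{th6.1}. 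Hence $a_{1}=a_{2}$ and $b_{1}=b_{2}$, and because all $p_{1}$-cycles (and all $p_{2}$-cycles) share a sign, $\mathrm{sgn}(W_{1})=\mathrm{sgn}(W_{2})$, contradicting SSSD. This pins $l_{S}(v^{*})$ to the asserted value, and the chain $l_{S}(v_{1})\ge l_{S}(v^{*})-d(v^{*},v_{1})$ together with Lemma~\ref{le2.13} then recovers $l_{S}(k)$ for every $k$.

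The main obstacle I expect is not any single step but the case-by-case bookkeeping, since the Frobenius/sign argument is essentially uniform across all fourteen subcases. For each digraph one must (i) verify that $v_{1}$ lies on a cycle of each length (true by direct inspection of the definitions), (ii) identify the vertex $v^{*}$ of largest $\mathrm{exp}_{S}$ together with a witness target $u^{*}$ making the path $P$ genuinely unique so that the decomposition step is legitimate, and (iii) match the permutation $\sigma$ that orders the $l_{S}(v_{m})$ to the asymmetric formulas of cases~(2), (4), (5), (6), (8), (10) in which $\sigma$ is not the identity. Once these verifications are carried out digraph-by-digraph, the template produces each of the formulas (1)--(14).
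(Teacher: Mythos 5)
Your proposal is correct and takes essentially the same approach as the paper: the paper likewise obtains the upper bound at $v_{1}$ from Lemma \ref{le2.12} combined with the exponents of Lemma \ref{le2.16.1} and the SSSD closed walk of length $p_{1}p_{2}$ coming from a distinguished cycle pair, proves the matching lower bound at the extremal vertex by the Frobenius/sign argument of Case 1 of Theorem \ref{th6.1}, and then squeezes the remaining values via distances and Lemma \ref{le2.13}. The paper merely writes out case (1) and compresses cases (2)--(14) into ``in a same way,'' which is exactly the template you describe.
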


\begin{proof}
(1) By Lemma \ref{le2.16.1}, we get
$l_{\mathscr{T}}(v_{1})\leq(n-1)(n-3)+n(n-2)$ and
$l_{\mathscr{T}}(v_{n})\leq l_{\mathscr{T}}(v_{1}) +d(v_{n},
v_{1})\leq(2n-1)(n-2).$ As the proof of Case 1 in Theorem
\ref{th6.1}, we can prove
$$l_{\mathscr{T}}(v_{n})=l_{\mathscr{T}}(v_{n}, v_{1})=(2n-1)(n-2),\
l_{\mathscr{T}}(v_{1})=(n-1)(n-3)+n(n-2)$$ and
$l_{\mathscr{T}}(k)=l_{\mathscr{T}}(v_{k})=2n(n-3)+k+2\ (1\leq k\leq
n).$ In a same way, we can prove the Theorems (2)-- (14) $\ \ \
\Box$
\end{proof}

\begin{theorem} \label{th6.11} %------
Let $S$ be a primitive nonpowerful signed digraph with order
$n(n\geq 14)$. Then we have:

(1) There is no $S$ such that $l_{s}(k)\in [2n^{2}-8n+10+k,
2n^{2}-4n +k]$ for $1\leq k\leq n-2$ and no $S$ such that
$l_{S}(k)\in [2n^{2}-8n+9+k, 2n^{2}-4n +k]$ for $n-1\leq k\leq n$ if
$n$ is an positive even integer.

(2) If $n$ is an positive odd integer, there is no $S$ such that
$l_{S}(k)\in [2n^{2}-6n+5+k, 2n^{2}-4n +k]$ for $1\leq k\leq n$;

there is no $S$ such that $l_{S}(k)\in [2n^{2}-8n+10+k, 2n^{2}-6n
+k+1]$ for $1\leq k\leq n-2$;

there is no $S$ such that $l_{S}(k)\in [2n^{2}-8n+9+k, 2n^{2}-6n
+k+1]$ for $n-1\leq k\leq n$; and further, we have:

(i) $l_{S}(k)=2n^{2}-6n+4+k(1\leq k\leq n)$ if and only if $|S|\cong
D_{2, 1}$;

(ii) $l_{S}(k)=2n^{2}-6n+3+k(1\leq k\leq n)$ if and only if
$|S|\cong D_{2, 2}$, the cycles with the same length have the same
sign in $S$;

(iii) $l_{S}(k)=2n^{2}-6n+2+k(1\leq k\leq n)$ if and only if
$|S|\cong D_{2, 3}$ or $|S|\cong \mathscr{L}$, the cycles with the
same length have the same sign in $S$.

(3) (i) $l_{S}(k)=2n^{2}-8n+9+k(1\leq k\leq n-2)$ if and only if
$|S|\cong F$ or $|S|\cong F_{2}$, the cycles with the same length
have the same sign in $S$;

(ii) $l_{S}(k)=2n^{2}-8n+8+k(1\leq k\leq n)$ if and only if
$|S|\cong F_{1}$, the cycles with the same length have the same sign
in $S$;

 $l_{S}(k)=2n^{2}-8n+8+k(1\leq k\leq n-2)$ if and only if
$|S|$ is isomorphic to one of $\{F_{1}, F_{3}, F_{4}, F_{5}, F_{6},
F^{'}_{n-3}\}$, the cycles with the same length have the same sign
in $S$;

 $l_{S}(k)=2n^{2}-8n+8+k(1\leq k\leq n-1)$ if and only if
$|S|$ is isomorphic to one of $\{F_{1}, F_{3},  F_{5}\}$, the cycles
with the same length have the same sign in $S$;

$l_{S}(k)=2n^{2}-8n+8+k(n-1\leq k\leq n)$ if and only if $|S|$ is
isomorphic to one of $\{F, F_{1},  F_{2}\}$, the cycles with the
same length have the same sign in $S$.

(iii) $l_{S}(k)=2n^{2}-8n+7+k(1\leq k\leq n)$ if and only if
$|S|\cong F_{7}$, the cycles with the same length have the same sign
in $S$; $l_{S}(k)=2n^{2}-8n+7+k(n-1\leq k\leq n)$ if and only if
$|S|$ is isomorphic to one of $\{F_{4}, F_{6},
F_{7}\}\bigcup\{F^{'}_{i}| 2\leq i\leq n-3\}$, the cycles with the
same length have the same sign in $S$; $l_{S}(k)=2n^{2}-8n+7+k(k=
n)$ if and only if $|S|$ is isomorphic to one of $\{F_{3}$, $F_{4}$,
$F_{5}$, $F_{6}$, $F_{7}\}\bigcup\{F^{'}_{i}| 2\leq i\leq n-3\}$,
the cycles with the same length have the same sign in $S$.

(4) $2n^{2}-8n+6+k$ if and only if $|S|\cong D_{3, 1}$;
$2n^{2}-8n+5+k$ if and only if $|S|\cong D_{3, 2}$, the cycles with
the same length have the same sign in $S$; $2n^{2}-8n+4+k$ if and
only if $|S|\cong D_{3, 3}$ or $|S|\cong \mathscr{B}_{1}$, the
cycles with the same length have the same sign in $S$;
$2n^{2}-8n+3+k$ if and only if $|S|$ is isomorphic to one of
$\{D_{3, 4},  \mathscr{B}_{2}, \mathscr{B}_{3}, \mathscr{B}_{4}\}$,
the cycles with the same length have the same sign in $S$.

(5) For any positive integer $n$, there is no $S$ such that
$l_{s}(k)\in [2n^{2}-9n+13, 2n^{2}-8n +2+k]$ for $1\leq k\leq n$.

\end{theorem}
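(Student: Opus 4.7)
The plan is to combine the global upper bound of Lemma~\ref{le2.14} with its sharper refinements for the special underlying digraphs $D_1, D_2$ in Lemmas~\ref{le2.14.01} and \ref{le2.14.02}, and with the structural reductions of Theorems~\ref{th3.5} and \ref{th4.3}, so as to pin the possible values of $l_S(k)$ into a handful of narrow tiers. Each tier will then be matched against the explicit values computed for the named digraphs in Theorems~\ref{th6.1} and \ref{th6.1.1}; the gaps in (1), (2) and (5) arise precisely because no digraph lies strictly between consecutive tiers, and the characterisations in (2)(i)--(iii), (3), (4) arise because the exact values uniquely identify each underlying digraph.

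For the coarsest gaps (parts (1) and (2), upper intervals): Lemma~\ref{le2.14} forces $l_S(k) \leq 2n^2 - 6n + k + 4$ unless $|S|$ is $D_1$ or $D_2$. In those two exceptional cases Lemmas~\ref{le2.14.01} and \ref{le2.14.02} determine $l_S(k)$ exactly, so no local base can fall strictly between a $D_1$/$D_2$ value and the bound $2n^2 - 6n + k + 4$. Comparing these two levels parity by parity yields the claimed forbidden intervals $[2n^2 - 8n + 10 + k,\,2n^2 - 4n + k]$ (and its variant for $k \geq n-1$) when $n$ is even, and the analogous refined partition when $n$ is odd.

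For the second-tier gaps and the characterisations (2)(i)--(iii), (3), (4): once $l_S(k) \geq \frac{3}{2}n^2 - 3n + k + 4$, Theorem~\ref{th3.5} forces $|C(S)| = 2$ with $\gcd(p_1, p_2) = 1$, $p_1 + p_2 > n$, and every pair of $p_1$-, $p_2$-cycles distinguished; Theorem~\ref{th4.3} then restricts $l_S(k)$ to one of $(2n-1)p_1$, $(2n-2)p_1 + k$, or $n + 2p_1(p_2 - 1)$, so for these values to reach the range above $2n^2 - 8n + 3 + k$ we must have $p_1 \in \{2, 3\}$ and $p_2 \in \{n-1, n\}$. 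For each such cycle-length set I would enumerate every primitive strongly connected digraph on $n$ vertices whose only cycles have those two lengths: the $(2, n)$ case yields $D_{2, i}$ for $i = 1, 2, 3$; the $(2, n-1)$ case yields $\mathscr{L}$ (when $n$ is odd) together with $F, F_1, F_2, F_3, F_4, F_5, F_6, F_7$ and the family $F'_i$; the $(3, n)$ case yields $D_{3, i}$; and the $(3, n-1)$ case yields $\mathscr{B}_1, \mathscr{B}_2, \mathscr{B}_3, \mathscr{B}_4$. Theorems~\ref{th6.1} and \ref{th6.1.1} then give the exact local base of each, and matching the displayed numerical value against this enumerated list produces both the remaining gaps and the characterisations. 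Part (5) follows from the same enumeration together with the crude bound of Lemma~\ref{le2.14.0}, which places every remaining admissible digraph with $p_1 \geq 3$ below $2n^2 - 9n + 13$.

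The main obstacle will be the enumeration step above: verifying that the lists $\{D_{k,i}\}$, $\{\mathscr{L}, F, F_1, \ldots, F_7, F'_i\}$, and $\{\mathscr{B}_1, \ldots, \mathscr{B}_4\}$ are exhaustive for their respective admissible cycle-length pairs $(p_1, p_2)$. This amounts to a careful combinatorial case analysis of chord configurations on an $n$-cycle (or $(n-1)$-cycle) that produce exactly one additional short cycle length, together with a check that the sign assignments forced by the distinguished-pair condition are consistent; it is routine but lengthy bookkeeping rather than a single clean argument, and it is where the hypothesis $n \geq 14$ is used to prevent sporadic coincidences among small cycle lengths from inflating the list.
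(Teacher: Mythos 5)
Your overall architecture matches the paper's: reduce via Lemma~\ref{th3.5} and Lemma~\ref{th4.3} to a short list of admissible cycle-length sets $C(S)=\{p_1,p_2\}$, enumerate the underlying digraphs for each, and read off the exact local bases from Lemmas~\ref{le2.14.01}--\ref{le2.14} and Theorems~\ref{th6.1}, \ref{th6.1.1}. But there is a concrete error at the pivotal step: you claim the bounds of Lemma~\ref{th4.3} force $p_1\in\{2,3\}$ and $p_2\in\{n-1,n\}$. This is backwards. The bounds $(2n-1)p_1$, $(2n-2)p_1+k$ and $n+2p_1(p_2-1)$ are \emph{increasing} in $p_1$, so to reach values near $2n^2-9n+13$ the \emph{shorter} cycle must be long: if $p_1=2$ then $l_S(k)\leq(2n-1)\cdot 2=4n-2$, which for $n\geq 14$ is far below $\tfrac{3}{2}n^2-3n+k+4$ and hence cannot occur in this regime at all. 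The correct computation (as in the paper) is $(2n-2)p_1+n\leq 2n^2-9n+8$ when $p_2=n$, $p_1\leq n-4$, and $n+2p_1(p_2-1)\leq 2n^2-9n+12$ when $p_2\leq n-1$, $p_1\leq n-3$, leaving exactly the four cases $(p_1,p_2)\in\{(n-1,n),(n-2,n),(n-3,n),(n-2,n-1)\}$.

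This error propagates into your enumeration: the families you would need to enumerate are digraphs with cycle lengths $\{n-2,n\}$ (giving $D_{2,i}$ and $\mathscr{L}$, with $n$ odd so that $\gcd(n-2,n)=1$), $\{n-2,n-1\}$ (giving $F$, $F_1,\ldots,F_7$, $F'_i$), $\{n-3,n\}$ (giving $D_{3,i}$ and $\mathscr{B}_1,\ldots,\mathscr{B}_4$), and $\{n-1,n\}$ (the $D_1$, $D_2$ case already settled by Lemmas~\ref{le2.14.01} and \ref{le2.14.02}) --- not digraphs whose short cycle has length $2$ or $3$. Your own matching is internally inconsistent on this point, since $D_{2,i}$ manifestly contains an $(n-2)$-cycle (via the chord $(v_1,v_{n-2})$) and no $2$-cycle. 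Enumerating chord configurations that create a cycle of length $2$ or $3$ on an $n$- or $(n-1)$-cycle is a different (and here irrelevant) combinatorial problem, so as written the proof cannot be completed; the fix is to redo the inequality analysis of Lemma~\ref{th4.3} and enumerate the long-short-cycle families instead.
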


\begin{proof}
Note that $n\geq 14$, then $2n^{2}-9n+12\geq \displaystyle
\frac{3}{2}n^{2}-3n+k+4.$ By Lemma \ref{th3.5}, then
$C(S)=\{p_{1},p_{2}\}, p_{1}<p_{2}, p_{1}+p_{2}>n$, all the
$p_{1}-$cycles have the same sign, all the $p_{2}-$cycles have the
same sign in $S$. By Lemma \ref{th4.3}, we know that for $1\leq
k\leq n$,
$$l_{S}(k)\leq \left \{\begin{array}{ll}
 (2n-2)p_{1}+n\leq 2n^{2}-9n+8,\ & \ p_{2}=n, p_{1}\leq n-4;
\\ n+2p_{1}(p_{2}-1)\leq 2n^{2}-9n+12,\ & \ p_{1}\leq n-3, p_{2}\leq n-1.\end{array}\right.$$
So, if $l_{S}(k)\geq 2n^{2}-9n+13$, there are just the following
cases:

(1) $p_{2}=n, p_{1}= n-1$;

(2) $p_{2}=n, p_{1}= n-2$;

(3) $p_{2}=n, p_{1}= n-3$;

(4) $p_{2}=n-1, p_{1}= n-2$.

Then the theorem follows from the Lemmas
\ref{le2.14.01}--\ref{le2.14}, Theorems \ref{th6.1}, \ref{th6.1.1}.
$\ \ \ \Box$
\end{proof}

\noindent{\bf Acknowledgment}

Many thanks to the referees for their kind reviews and helpful
suggestions.

\small {

}

\end{document}